\documentclass[12pt,a4paper]{amsart}
\usepackage[utf8]{inputenc}
\usepackage{amsmath}
\usepackage{amsfonts}
\usepackage{amssymb}
\usepackage{tikz}
\usepackage{mathtools}
\usepackage[margin=2.0cm]{geometry}
\usepackage{wasysym}
\usepackage[all]{xy}
\usepackage{multicol}
\usepackage{tensor}

\newcommand{\C}{\mathbb{C}}
\newcommand{\Gg}{\mathcal{G}}
\newcommand{\Gu}{\mathcal{G}^{(0)}}
\newcommand{\Hh}{\mathcal{H}}
\newcommand{\Kk}{\mathcal{K}}
\newcommand{\Hu}{\mathcal{H}^{(0)}}
\newcommand{\Uu}{\mathcal{U}}

\newcommand{\inv}{^{-1}}
\DeclareMathOperator{\End}{End}
\newcommand{\N}{\mathbb{N}}
\newcommand{\restimes}[2]{\tensor[_{#1}]{\times}{_{#2}}}

\DeclareMathOperator{\supp}{\textrm{supp}}

\DeclareMathOperator{\Ind}{Ind}
\DeclareMathOperator{\range}{range}
\DeclareMathOperator{\Aut}{Aut}
\DeclareMathOperator{\Bis}{B}

\newtheorem{theorem}{Theorem}[section]
\newtheorem{proposition}[theorem]{Proposition}
\newtheorem{lemma}[theorem]{Lemma}
\newtheorem{corollary}[theorem]{Corollary}
\theoremstyle{definition}
\newtheorem{definition}[theorem]{Definition}
\theoremstyle{remark}
\newtheorem{remark}[theorem]{Remark}
\newtheorem{example}[theorem]{Example}
\numberwithin{equation}{section}

\author[Brownlowe]{Nathan Brownlowe}
\address{Nathan Brownlowe, David Pask and David Robertson \\ School of Mathematics and
Applied Statistics  \\
University of Wollongong\\
NSW  2522\\
AUSTRALIA} 
\email{nathanb@uow.edu.au,david\_pask@uow.edu.au,dave84robertson@gmail.com}
\address{Jacqui Ramagge \\ School of Mathematics and Statistics \\ University of Sydney \\ NSW 2006 \\ AUSTRALIA}
\email{jramagge@gmp18.hbs.edu}
\address{Michael F. 
Whittaker \\ School of Mathematics and Statistics  \\
University of Glasgow\\ University Gardens \\ Glasgow Q12 8QW \\ SCOTLAND} 
\email{mfwhittaker@gmail.com}
\author[Pask]{David Pask}
\author[Ramagge]{Jacqui Ramagge}
\author[Robertson]{David Robertson}
\author[Whittaker]{Michael F. Whittaker}

\title{Zappa-Sz\'ep product groupoids and $C^*$-blends}

\thanks{This research was supported by the Australian Research Council and the University of Wollongong through a University Research Committee Grant to the fourth author. We would also like to take the opportunity to thank Ruy Exel and Charles Starling for many interesting discussions around the topics in this article.}

\keywords{$C^*$-algebra, groupoid, Zappa--Sz\'{e}p product, skew-product, algebra structure, blend}
\subjclass[2010]{Primary: {46L05}; Secondary: {20L05, 16S35}}

\begin{document}
\maketitle

\begin{abstract}
We study the external and internal Zappa-Sz\'ep product of 
topological groupoids. We show that under natural continuity assumptions the 
Zappa-Sz\'ep product groupoid is \'etale if and only if the individual 
groupoids are \'etale. In our main result we show that the 
$C^*$-algebra of a 
locally compact Hausdorff \'etale Zappa-Sz\'ep product groupoid is a 
$C^*$-blend, in the sense of Exel, of the individual groupoid 
$C^*$-algebras. 
We finish with some examples, including groupoids built from 
$*$-commuting 
endomorphisms, and skew product groupoids. 
\end{abstract}

\section{Introduction}

Group theory has for many years provided fertile ground for mathematicians 
working in $C^*$-algebras. Indeed, the 
notion of a group $C^*$-algebra is as old 
as the field itself, and possesses many interesting and natural properties. 
For 
instance, it is well known that the $C^*$-algebra of the direct product of 
groups $G$ and $H$ is the tensor product of the individual group 
$C^*$-algebras $C^*(G)$ and $C^*(H)$ (see, for instance, 
\cite[Examples~II.10.3.15]{Black}). It is also well known that the 
$C^*$-algebra of the semidirect product induced by an action 
$H\curvearrowright G$ is the crossed product $C^*$-algebra induced by the 
action $H\curvearrowright C^*(G)$ (see also 
\cite[Examples~II.10.3.15]{Black}). In this article we are interested in a 
third, and more general, notion of a product of groups called the 
Zappa-Sz\'{e}p product. As a consequence of our main result we are able to 
answer the natural questions: what is the group $C^*$-algebra of the 
Zappa-Sz\'{e}p product of two groups $G$ and $H$, and what does it have to do 
with $C^*(G)$ and $C^*(H)$?  

%Groupoid $C^*$-algebras have featured prominently in the study of 
%$C^*$-algebras since the seminal work of Renault~\cite{Ren}. The diversity of 
%examples of groupoid $C^*$-algebras has been a feature of their success, 
%especially in the study of $C^*$-algebras associated to dynamical systems. 
%The 
%purpose of this article is to describe the Zappa-Sz\'{e}p product of 
%topological groupoids and their associated $C^*$-algebras.
%Our approach is inspired by our combined work on groupoids, on Zappa-Sz\'{e}p 
%products of semigroups~\cite{BRRW} and on self-similar groups~\cite{LRRW}.

A Zappa-Sz\'{e}p product of groups $G$ and $H$ is a generalisation of a 
semidirect product, in the sense that neither group is necessarily normal in 
the product. Like the semidirect product, there is an internal and external 
Zappa-Sz\'ep product. 
A group $K$ is the internal Zappa-Sz\'{e}p product of $G$ and $H$ if 
$G,H$ are subgroups of $K$ such that $K=GH$ as a set, and $G \cap H = \{e\}$. 
It then follows that $K$ is in bijection with $G \times H$ as a set. 
If $G,H$ are both normal in $K$, then $K$ is isomorphic to the 
direct product of groups  $G \times H$ with pointwise multiplication. 
If only $G$ is normal, then $H$ acts 
on $G$ by conjugation $(h,g) \mapsto hgh^{-1}$, and $K$ is isomorphic to the 
semidirect product $G \rtimes H$. 
In general neither $G$ nor $H$ need be normal in $K$, so that direct and semidriect products are special cases of the Zappa-Sz\'ep product. 
In any case, since $K=GH$, given any $h\in H$ and $g\in G$ there are  elements 
$h\cdot g\in G$ and $h|_g\in H$ such that $hg=(h\cdot g)h|_g$, and the condition
$G \cap H = \{e\}$ forces $h\cdot g$ and $h|_g$ to be uniquely determined. 
The {\it action 
map} $\cdot: H\times G\to G$ is given by $(h,g)\mapsto h\cdot g$, and {\it the 
restriction map} $\left| \right. : H \times G \to H$ is given by $(h,g) 
\mapsto h |_g$. These maps can be used to define an associative multiplication 
and inversion on $G \times H$ by
\[
(g,h)(g',h') = (g(h\cdot g'),h|_{g'}h')\quad\text{ and }\quad (g,h)\inv = 
(h\inv\cdot g\inv, h\inv|_{g\inv}),
\]
respectively. The resulting group $G\Join H$ is called the \emph{Zappa-Sz\'ep 
product} of $G$ and $H$ and there is an isomorphism $G \Join H  \cong K$ given 
by $(g,h) \mapsto gh$ (see \cite{Z,Szep1,Szep2,Szep3}).
Given an arbitrary pair of groups $G$ and $H$ with a left action of $H$ on $G$ and a right action of $G$ on $H$ 
we  construct the external Zappa-Sz\'ep product $G\Join H$ as the set $G \times H$
with a product defined analogously.

In this article we extend the definitions given above to the more general 
setting of groupoids. Groupoids 
have featured prominently in the study of 
$C^*$-algebras since the seminal work of Renault~\cite{Ren}. The diversity of 
examples of groupoid $C^*$-algebras has been a feature of their success, 
especially in the study of $C^*$-algebras associated to dynamical systems. 
Building on the recent work on Zapp-Sz\'ep product semigroups \cite{BRRW} 
(which was influenced by the work on self-similar actions in 
\cite{nek_book,Law,LRRW,Wal}), 
we introduce 
the Zappa-Sz\'{e}p product of 
topological groupoids. Our notion of the Zappa-Sz\'{e}p product of groupoids 
is not new on an 
algebraic level. In \cite{AA} Aguiar and Andruskiewitsch introduced the notion 
of a matched pair of groupoids; a pair of groupoids $(\Gg,\Hh)$ is a 
matched pair if and only if there is a well-defined Zappa-Sz\'{e}p product 
$\Gg\Join\Hh$. (Note that in \cite{AA} $\triangleleft$ corresponds to our 
restriction, and 
$\triangleright$ corresponds to our action.) More generally, a Zappa-Sz\'{e}p 
product of categories was introduced in the work of Brin \cite{Brin}. However, 
in the specific case of groupoids, in which every morphism is an isomorphism, 
Brin's axioms (P1)--(P8) (see \cite[p.406]{Brin}) can be simplified.

After formalising the Zappa-Sz\'ep product $\Gg \Join \Hh$ of 
topological groupoids, we switch focus to $C^*$-algebras, and consider the 
relationship between the groupoid $C^*$-algebras of $\Gg$, $\Hh$ and 
$\Gg\Join\Hh$. In our main theorem we prove that $C^*(\Gg\Join\Hh)$ is a 
$C^*$-blend of $C^*(\Gg)$ and $C^*(\Hh)$. $C^*$-blends were recently 
introduced by Exel in \cite{Exel2014} during his examination of the possible 
algebraic and $C^*$-algebraic structures one can put on the tensor product $A 
\otimes B$ of two algebras $A$ and $B$. While several examples are examined in 
\cite{Exel2014}, any new theory in $C^*$-algebras always benefits from additional examples. Our main theorem allows us to work at the level 
of 
groupoids to describe concrete examples of $C^*$-blends, rather than with their $C^*$-algebraic completions. We are thus able to 
describe several 
new examples, including those involving Deaconu-Renault groupoids, and 
skew product 
groupoids. 

While we were at first surprised to learn that $C^*(\Gg\Join\Hh)$ is a 
$C^*$-blend of 
the individual groupoid $C^*$-algebras, once you scratch beneath the surface, 
the answer is a natural one. As discussed in \cite{Exel2014}, $C^*$-blends 
generalise crossed products of 
$C^*$-algebras by groups, in the sense that $A\rtimes G$ is a $C^*$-blend of 
$A$ and $C^*(G)$. The theory of $C^*$-blends then feels like a natural home 
for the $C^*$-algebras of Zappa-Sz\'ep products of groups, given that they 
generalise semidirect products, whose $C^*$-algebras are crossed products. 
There is also an interesting similarity between the conditions under which the 
product of two groups is a Zappa-Sz\'ep product, and when the product of two 
$C^*$-algebras is a $C^*$-blend. If $G$ and $H$ are subgroups 
of a 
group $K$, then $GH=\{gh:g\in G,h\in H\}$ is a group if and only if $GH=HG$, 
and is isomorphic to $G\Join H$ if and only if $G\cap H=\{e\}$. 
If $A,B$ are $C^*$-subalgebras of a $C^*$-algebra $C$, it is not hard to show 
that the statements $AB=\overline{\text{span}}\{ ab : a\in A, b\in B \}$ a 
$C^*$-algebra, $AB=BA$, and $AB$ a $C^*$-blend of 
$A$ and $B$ are equivalent. (See Remark~\ref{ref: chars of blends}.)

This article is organised as follows.
Section \ref{sec:prelim} provides preliminaries on topological groupoids and 
gives three specific examples that will be used heavily throughout the paper: 
transformation groupoids, Deaconu-Renault groupoids, and skew product 
groupoids. In Section \ref{sec:Zappa-Szep} we discuss the Zappa-Sz\'{e}p 
product $\Gg \Join \Hh$ of two groupoids $\Gg$ and $\Hh$, and show how the so 
called ``arrow space" of each groupoid along with certain identifications 
describes the ``arrow space" of $\Gg \Join \Hh$. We then provide an internal 
characterisation of a Zappa-Sz\'{e}p groupoid and show that if $\Gg$ and $\Hh$ 
are \'{e}tale groupoids, then so is $\Gg \Join \Hh$. In Section~\ref{sec: C* 
section} we prove our main theorem, which says that $C^*(\Gg\Join\Hh)$ is a 
$C^*$-blend of $C^*(\Gg)$ and $C^*(\Hh)$. We finish in 
Section~\ref{sec:examples} by examining several examples of Zappa-Sz\'{e}p 
product groupoids and their $C^*$-algebras.

\section{Preliminaries}\label{sec:prelim}

Let $\Gg$ be a set and suppose $\Gg^{(2)} \subset \Gg \times \Gg$. We say $\Gg$ is a \emph{groupoid} if there is a multiplication $(g,h) \mapsto gh$ from $\Gg^{(2)}$ to $\Gg$ and an inverse map $g \mapsto g\inv$ from $\Gg$ to $\Gg$ satisfying the following.
\begin{enumerate}
 \item If $(g,h), (h,k) \in \Gg^{(2)}$ then $(gh,k), (g,hk)\in\Gg^{(2)}$ and $g(hk) = (gh)k$.
 \item We have $(g\inv)\inv = g$ for all $g\in\Gg$.
 \item We have $(g,g\inv)\in \Gg^{(2)}$ for all $g\in\Gg$ and if $(g,h)\in\Gg^{(2)}$, then $g\inv(gh) = h$ and $(gh)h\inv = g$.
\end{enumerate}
We call $\Gg^{(2)}$ the set of \emph{composable pairs} and $\Gg^{(0)} := \{ gg\inv : g\in \Gg \}$ the set of \emph{units}. The \emph{range} map $\Gg \to \Gg^{(0)}$ is given by $g\mapsto gg\inv$ and the \emph{source} map $\Gg \to \Gg^{(0)}$ is given by $g \mapsto g\inv g$.

A useful interpretation of this definition is that an element $g$ of a groupoid $\Gg$ is an arrow pointing from the source of $g$ to the range of $g$.
\[
\begin{tikzpicture}
\node (l) at (0,0){$gg\inv$};
\node (r) at (4,0){$g\inv g$};
\draw [->] (r) -- node[auto] {$g$} (l) ;
\end{tikzpicture}
\]
We think of inversion as reversing the direction of the arrow, and a pair $(g,h)\in\Gg^{(2)}$ whenever the source of $g$ agrees with the range of $h$ in $\Gg^{(0)}$; the product $gh$ is then the composition of arrows.

We call $\Gg$ a \emph{topological groupoid} if $\Gg$ is a topological space and the multiplication and inversion maps are continuous, where $\Gg^{(2)}$ has the relative product topology. We call a topological groupoid \emph{\'etale} if the range (and hence also the source) map is a local homeomorphism.

\begin{example}[Tranformation groupoids {\cite[p.8]{Pat}}]
Let $G$ be a topological group acting continuously on a topological space $X$. There is a groupoid $G\ltimes X$ given by
\begin{align*}
 &G \ltimes X = G \times X \\
 &(G \ltimes X)^{(2)} = \{((g,y),(h,x)) : g,h\in G, x,y\in X, \text{ and } y=h\cdot x \} \\
 &(g,h\cdot x)(h,x) = (gh,x) \\
 &(g,x)\inv = (g\inv,g\cdot x)
\end{align*}
We call $G\ltimes X$ a \emph{transformation groupoid}. The unit space is given by $\{e\} \times X \cong X$ and an element $(g,x)$ has range $g\cdot x$ and source $x$. When given the product topology $G\ltimes X$ is a topological groupoid, and is \'etale if and only if $G$ has the discrete topology.
\end{example}

\begin{example}[Deaconu-Renault groupoids {\cite[Section 
3]{SimsWilliams2015}}]\label{eg: DR groupoid}
The study of topological groupoids associated with endomorphisms began with the seminal paper of Deaconu \cite{Deaconu1995}, and has been extended in various directions. In order to associate a groupoid $C^*$-algebra to a $k$-graph, Kumjian and Pask \cite{KP2000} began the extension of Deaconu-Renault groupoids to transformations of $\N^k$. More recently, a completely general description of groupoids associated with transformations of $\N^k$ and their topology is given in \cite[Section 3]{SimsWilliams2015}.

Let $X$ be a topological space and suppose $k\geq 1$. Following the literature \cite{AR} we say that $\sigma:X \to X$ is an endomorphism if $\sigma$ is a surjective local homeomorphism. Suppose $\theta:\N^k \to \End(X)$ is an action of $\N^k$ on $X$ by continuous endomorphisms. Define the \emph{Deaconu-Renault groupoid} $X \rtimes_\theta \N^k$ by
\begin{align*}
&X \rtimes_\theta \N^k = \{ (x,m-n,y) : x,y\in X, m,n \in \N^k, \theta_m(x) = \theta_n(y) \} \\
&(X \rtimes_\theta \N^k)^{(2)} = \{ ((x,k-l,y),(w,m-n,z)) : y=w \} \\
&(x,k-l,y)(y,m-n,z) = (x,(k+m)-(l+n),z) \\
&(x,m-n,y)\inv = (y,n-m,x) \\
\end{align*}
The unit space is $\{(x,0,x) : x\in X\} \cong X$ and an element $(x,m-n,y)$ has range $x$ and source $y$. With the topology described in \cite[Section 3]{SimsWilliams2015}, the groupoid $X \rtimes_\theta \N^k$ becomes a topological groupoid.
\end{example}

\begin{example}[Skew product groupoids {\cite[Section 4]{KQR2001}}] \label{ex:skewproductgroupoid}
Fix an \'etale groupoid $\Gg$, a discrete group $A$ and a continuous 
homomorphism $c : \Gg \to A$. The \emph{skew-product groupoid} $\Gg(c)$ 
is is given by
\begin{align*}
&\Gg(c)=\Gg \times A \\
&\Gg(c)^{(2)} = \{ (g,\alpha)(h,\beta) : (g,h)\in\Gg^{(2)}, \beta = 
 \alpha 
 c(g) \} \\
&(g,\alpha)(h,\alpha c(g)) = (gh,\alpha) \\
&(g,\alpha)\inv = (g\inv, \alpha c(g)).
\end{align*}
We will denote\footnote{This unusual choice of labelling the range and 
source 
maps by $b$ and $t$ is explained in Section~\ref{sec:Zappa-Szep}.} the range and source 
maps by 
$b,t : \Gg(c) \to \Gg(c)^{(0)}$ 
respectively. We have
\begin{align*}
 b(g,\alpha) &= (g,\alpha)(g\inv,\alpha c(g)) = (gg\inv,\alpha) \\
 t(g,\alpha) &= (g\inv,\alpha c(g))(g,\alpha) = (g\inv g, \alpha c(g)).
\end{align*}
The unit space is therefore $\Gg(c)^{(0)} = \Gg^{(0)} \times A$.
\end{example}

\section{Zappa-Sz\'ep products of groupoids}\label{sec:Zappa-Szep}

In this section we will describe the Zappa-Sz\'{e}p product of  two groupoids with bijective unit spaces. To do this we need to recall a fibre product of two sets. Suppose $G$, $H$, and $X$ are sets such that $\gamma:G \to X$ and $\eta:H \to X$ are maps. The \emph{fibre product} (or pull-back) of $G$ and $H$ over $X$ is the set
\[
G \restimes{\gamma}{\eta} H:=\{(g,h) \in G \times H : \gamma(g)=\eta(h)\}.
\]

Let $\Gg$ and $\Hh$ be groupoids and suppose there is a bijection $\Gg^{(0)} \to \Hh^{(0)}$. For simplicity, we will consider them the same set and write $\Gu =\Uu = \Hu$. As in \cite{AA} we will think of elements of $\Gg$ as vertical arrows and elements of $\Hh$ as horizontal arrows as shown below:
\[
\begin{tikzpicture}
\node (b) at (0,-1){$b(g)=gg\inv$};
\node (t) at (0,1){$t(g) = g\inv g$};
\node at (-0.3,0){$g$};
\draw [->] (t) -- (b);

\node (l) at (3,0){$l(h) = hh\inv$};
\node (r) at (7,0){$r(h) = h\inv h$};
\node at (5,0.3){$h$};
\draw [->] (r) -- (l);

\end{tikzpicture}
\]
We have denoted the range and source maps of $\Gg$ by $b$ for bottom and $t$ for top, respectively, and the range and source maps of $\Hh$ by $l$ for left and $r$ for right, respectively. Suppose that there are maps
\[
 \cdot : \Hh \restimes{r}{b} \Gg \to \Gg \text{ given by } (h,g) \mapsto h\cdot g 
 \quad\text{and}\quad
 |: \Hh \restimes{r}{b} \Gg \to \Hh \text{ given by } (h,g) \mapsto h|_g
\]
satisfying
\begin{multicols}{2}
\begin{itemize}
\item[(ZS1)] $(h_1h_2)\cdot g = h_1\cdot(h_2\cdot g)$
\item[(ZS2)] $h\cdot(g_1g_2) = (h\cdot g_1)(h|_{g_1} \cdot g_2)$
\item[(ZS3)] $h|_{g_1g_2} = (h|_{g_1})|_{g_2}$
\item[(ZS4)] $(h_1h_2)|_g = (h_1|_{h_2\cdot g})(h_2|_{g})$
\item[(ZS5)] $b(h\cdot g) = l(h)$
\item[(ZS6)] $r(h|_g) = t(g)$
\item[(ZS7)] $t(h\cdot g) = l(h|_g)$
\item[(ZS8)] $b(g)\cdot g = g$
%\item[(ZS9)] $h \cdot r(h) = l(h)$
%\item[(ZS10)] $b(g)|_g = t(g)$
\item[(ZS9)] $h|_{r(h)} = h$,
\end{itemize}
\end{multicols}

\noindent
whenever these formulae make sense. We call $\cdot$ the \emph{action} map and 
$|$ the \emph{restriction} map. These axioms appear in \cite{AA}, where the 
action is denoted $\triangleright$, and the restriction is denoted 
$\triangleleft$.

Before we can construct the Zappa-Sz\'ep product groupoid, we need the following lemma.

\begin{lemma}[cf.\ {\cite[Lemma 1.2]{AA}}] \label{lem:actionrestrictioninverses}
For any $(h,g) \in \Hh\restimes{r}{b} \Gg$ we have
\begin{enumerate}
 \item\label{old_ZS9} $h \cdot r(h) = l(h)$,
 \item\label{old_ZS10} $b(g)|_g = t(g)$,
 \item\label{inv_act} $(h\cdot g)\inv = h|_g\cdot g\inv$, and
 \item\label{inv_rest} $(h|_g)\inv = h\inv|_{h\cdot g}$.
\end{enumerate}
\end{lemma}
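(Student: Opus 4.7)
The plan is to handle (\ref{old_ZS9}) and (\ref{old_ZS10}) first via an idempotent argument, and then derive (\ref{inv_act}) and (\ref{inv_rest}) from these by applying the multiplicative axioms (ZS2) and (ZS4) to exhibit $h|_g \cdot g\inv$ and $h\inv|_{h\cdot g}$ as the unique inverses of $h\cdot g$ and $h|_g$, respectively. The overarching tool I will use repeatedly is that an idempotent in a groupoid is automatically a unit: if $x\cdot x = x$, then left-multiplying by $x\inv$ forces $x = l(x)$.

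For (\ref{old_ZS9}), I would apply (ZS2) with $g_1 = g_2 = r(h)$ (which is a unit, hence composable with itself in $\Gg$), obtaining
\[
h\cdot r(h) \;=\; h\cdot\bigl(r(h)\,r(h)\bigr) \;=\; \bigl(h\cdot r(h)\bigr)\bigl(h|_{r(h)}\cdot r(h)\bigr) \;=\; \bigl(h\cdot r(h)\bigr)\bigl(h\cdot r(h)\bigr),
\]
where the last step uses (ZS9). Hence $h\cdot r(h)$ is idempotent in $\Gg$, and must therefore equal $b(h\cdot r(h)) = l(h)$ by (ZS5). The proof of (\ref{old_ZS10}) is symmetric: apply (ZS4) with $h_1 = h_2 = b(g)$, use (ZS8) to simplify $b(g)\cdot g = g$, and conclude that $b(g)|_g$ is idempotent in $\Hh$, hence equal to $r(b(g)|_g) = t(g)$ by (ZS6).

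For (\ref{inv_act}), I would compute the product $(h\cdot g)(h|_g\cdot g\inv)$ using (ZS2) with $g_1 = g$, $g_2 = g\inv$ (which are composable because $t(g)=b(g\inv)$):
\[
(h\cdot g)\bigl(h|_g\cdot g\inv\bigr) \;=\; h\cdot(g\,g\inv) \;=\; h\cdot b(g) \;=\; h\cdot r(h) \;=\; l(h),
\]
where the penultimate equality uses that $(h,g)\in\Hh\restimes{r}{b}\Gg$, and the final equality is (\ref{old_ZS9}). Since $l(h) = b(h\cdot g)$ by (ZS5) is the range of $h\cdot g$ in $\Gg$, uniqueness of inverses gives $h|_g\cdot g\inv = (h\cdot g)\inv$. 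The argument for (\ref{inv_rest}) is the mirror image: apply (ZS4) with $h_1 = h\inv$, $h_2 = h$ to get
\[
\bigl(h\inv|_{h\cdot g}\bigr)\bigl(h|_g\bigr) \;=\; (h\inv h)|_g \;=\; r(h)|_g \;=\; b(g)|_g \;=\; t(g),
\]
using (\ref{old_ZS10}) at the end. By (ZS6), $t(g) = r(h|_g)$ is the source of $h|_g$ in $\Hh$, so $h\inv|_{h\cdot g} = (h|_g)\inv$.

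There is no real conceptual obstacle; the only subtlety is bookkeeping — at each application of an axiom one must check that the required pair lies in $\Hh\restimes{r}{b}\Gg$ (or that the relevant product is defined in $\Gg$ or $\Hh$). These verifications are routine consequences of (ZS5)--(ZS7) together with the standing hypothesis $(h,g)\in\Hh\restimes{r}{b}\Gg$, so the proof should be short once the idempotent observation is in place.
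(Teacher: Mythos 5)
Your proof is correct and follows essentially the same route as the paper: parts (3) and (4) are the identical computations via (ZS2)/(ZS4) plus uniqueness of inverses, and for (1) and (2) you apply the same axioms, merely expanding the unit as $r(h)=r(h)r(h)$ (resp.\ $b(g)=b(g)b(g)$) to get an idempotent, where the paper expands $g=b(g)g$ (resp.\ $h=hr(h)$) to get a one-sided identity --- a cosmetic difference. All the composability checks you defer are indeed routine, so nothing is missing.
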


\begin{proof}
For \eqref{old_ZS9}, using (ZS2), (ZS9), and that $b(g)=r(h)$ we compute
\begin{equation*}
h \cdot g =h\cdot(b(g)g)=(h\cdot b(g))(h|_{b(g)}\cdot g)=(h \cdot r(h))(h\cdot g),
\end{equation*}
which implies $h \cdot r(h)=b(h \cdot g)=l(h)$ by (ZS5).

For \eqref{old_ZS10}, using (ZS4) and then (ZS8) we compute
\begin{equation*}
h|_g=(h r(h))|_g=h|_{(r(h)\cdot g)}r(h)|_g=h|_{b(g)\cdot g} b(g)|g=h|_g 
b(g)|_g,
\end{equation*}
which implies that $b(g)|_g=r(h|_g)=t(g)$ by (ZS6).

For \eqref{inv_act} and \eqref{inv_rest}, using (ZS5), part \eqref{old_ZS9}, and (ZS2) we have
\[
 (h\cdot g)(h\cdot g)\inv = b(h\cdot g)
 = l(h)
 = h\cdot r(h)
 = h\cdot b(g)
 = h\cdot(gg\inv)
 = (h\cdot g)(h|_g\cdot g\inv) 
\]
and \eqref{inv_act} follows by cancelling $(h\cdot g)$ on the left. Similarly, using (ZS6), part \eqref{old_ZS10}, and (ZS4) we have
\[
 (h|_g)\inv(h|_g) = r(h|_g) = t(g) = b(g)|_g = r(h)|_g = (h\inv h)|_g = (h\inv|_h\cdot g)(h|_g)
\]
and \eqref{inv_rest} follows by cancelling $(h|_g)$ on the right.
\end{proof}

\noindent
We define the \emph{Zappa-Sz\'ep product} as the set
\[
 \Gg\Join \Hh = \Gg \restimes{t}{l} \Hh,
\]
with the range of $(g,h) \in \Gg \Join \Hh$ given by $(b(g),b(g)) \in \Uu 
\times \Uu$, and the source of $(g,h)$ given by $(r(h),r(h)) \in \Uu \times 
\Uu$. We have
\[
 (\Gg\Join \Hh)^{(2)} = \{ ((g_1,h_1),(g_2,h_2)) : r(h_1) = b(g_2) \} .
\]
We define multiplication by
\[
 (g_1,h_1)(g_2,h_2) = (g_1(h_1\cdot g_2),h_1|_{g_2}h_2)
\]
and inversion by
\[
 (g,h)\inv = (h\inv\cdot g\inv, h\inv|_{g\inv}).
\]

\begin{proposition}\label{prop:ZS groupoid}
With the above structure, $\Gg\Join\Hh$ is a groupoid with unit space $\Gg^{(0)} \restimes{t}{l} \Hh^{(0)} \cong \Uu$.
\end{proposition}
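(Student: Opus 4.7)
The plan is to verify the three groupoid axioms in sequence, using the defining identities (ZS1)--(ZS9) together with Lemma \ref{lem:actionrestrictioninverses}. First I would check that the multiplication and inverse formulas genuinely land in $\Gg\Join\Hh$. If $((g_1,h_1),(g_2,h_2)) \in (\Gg\Join\Hh)^{(2)}$ then $r(h_1)=b(g_2)$, so $h_1\cdot g_2$ and $h_1|_{g_2}$ are both defined; axiom (ZS5) gives $b(h_1\cdot g_2)=l(h_1)=t(g_1)$, so the $\Gg$-product $g_1(h_1\cdot g_2)$ is composable; axiom (ZS6) gives $r(h_1|_{g_2})=t(g_2)=l(h_2)$, so the $\Hh$-product $h_1|_{g_2}h_2$ is composable; and (ZS7) produces the fibre-product condition $t(g_1(h_1\cdot g_2))=l(h_1|_{g_2}h_2)$. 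The same sort of bookkeeping, using $l(h)=t(g)$ and (ZS7), shows that $(g,h)^{-1}=(h^{-1}\cdot g^{-1},h^{-1}|_{g^{-1}})$ belongs to $\Gg\Join\Hh$.

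Next I would compute the range and source of a typical element. Expanding $(g,h)(g,h)^{-1}$, axiom (ZS1) converts $h\cdot(h^{-1}\cdot g^{-1})$ to $l(h)\cdot g^{-1}=b(g^{-1})\cdot g^{-1}$, which is $g^{-1}$ by (ZS8), so the first coordinate collapses to $gg^{-1}=b(g)$. In the second coordinate, (ZS4) collapses $h|_{h^{-1}\cdot g^{-1}}\,h^{-1}|_{g^{-1}}$ to $(hh^{-1})|_{g^{-1}}=l(h)|_{g^{-1}}=b(g^{-1})|_{g^{-1}}$, which equals $t(g^{-1})=b(g)$ by Lemma \ref{lem:actionrestrictioninverses}(2). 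A symmetric computation gives $(g,h)^{-1}(g,h)=(r(h),r(h))$. Hence both products are diagonal pairs in $\Gu\times\Hu$, and since every unit $u$ satisfies $b(u)=t(u)=u$ and $l(u)=r(u)=u$, the fibre product $\Gu\restimes{t}{l}\Hu$ is precisely the diagonal, canonically bijective with $\Uu$.

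The central step is associativity. Unfolding $((g_1,h_1)(g_2,h_2))(g_3,h_3)$, the first coordinate is $g_1(h_1\cdot g_2)\bigl((h_1|_{g_2}h_2)\cdot g_3\bigr)$, which by (ZS1) equals $g_1(h_1\cdot g_2)\bigl(h_1|_{g_2}\cdot(h_2\cdot g_3)\bigr)$ and then by (ZS2) equals $g_1\bigl(h_1\cdot(g_2(h_2\cdot g_3))\bigr)$. Dually, the second coordinate is $(h_1|_{g_2}h_2)|_{g_3}h_3$, which by (ZS4) becomes $(h_1|_{g_2})|_{h_2\cdot g_3}\,h_2|_{g_3}h_3$ and then by (ZS3) becomes $h_1|_{g_2(h_2\cdot g_3)}\,h_2|_{g_3}h_3$. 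Both coordinates now match those of $(g_1,h_1)((g_2,h_2)(g_3,h_3))$. This chain of rewrites, while each step is forced by exactly one axiom, is the main bookkeeping obstacle because nested restriction bars are easy to misplace.

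Finally, parts (3) and (4) of Lemma \ref{lem:actionrestrictioninverses} immediately give $((g,h)^{-1})^{-1}=(g,h)$. The cancellation identities $(g,h)^{-1}((g,h)(g',h'))=(g',h')$ and $((g,h)(g',h'))(g',h')^{-1}=(g,h)$ then follow from the associativity above combined with the range/source calculation, once one verifies that left-multiplication by $(b(g),b(g))$ and right-multiplication by $(r(h'),r(h'))$ act as identities. This last verification is a one-line application of (ZS8) and Lemma \ref{lem:actionrestrictioninverses}(2), completing all three groupoid axioms.
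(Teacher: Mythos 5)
Your proof is correct and follows essentially the same route as the paper: well-definedness via (ZS5)--(ZS7), associativity via (ZS1)--(ZS4), and the inverse and unit computations via (ZS8), (ZS9) and Lemma \ref{lem:actionrestrictioninverses}. You in fact spell out the associativity rewriting that the paper dismisses as ``parenthetical gymnastics''; the only quibble is that your final right-multiplication-by-units check actually rests on Lemma \ref{lem:actionrestrictioninverses}(1) and (ZS9) rather than on (ZS8) and part (2), which govern the left-multiplication case.
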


\begin{proof}
Conditions (ZS5) and (ZS6) imply that the multiplication is well-defined and 
(ZS7) shows $\Gg\Join \Hh$ is closed under multiplication. Suppose 
$((g_1,h_1),(g_2,h_2))$ and $((g_2,h_2),(g_3,h_3)) \in (\Gg\Join\Hh)^{(2)}$. 
It is easy to see that $(g_1(h_1\cdot g_2),h_1|_{g_2}h_2),(g_3,h_3)) \in 
(\Gg\Join\Hh)^{(2)}$ and $((g_1,h_1),(g_2(h_2\cdot g_3),h_2|_{g_3}h_3)) \in 
(\Gg\Join\Hh)^{(2)}$. Some parenthetical gymnastics using (ZS1-4) shows that 
the following associativity holds:
\begin{equation*}
((g_1  (h_1\cdot g_2),h_1|_{g_2}h_2))(g_3,h_3) = (g_1,h_1)(g_2(h_2 \cdot g_3),h_2|_{g_3}h_3).
\end{equation*}
For $(g,h) \in \Gg\Join\Hh$ we have
\begin{align}
\notag
 ((g,h)\inv)\inv &= (h\inv\cdot g\inv, h\inv|_{g\inv})\inv \\
 \label{four_prod}
 &= ((h\inv|_{g\inv})\inv\cdot (h\inv\cdot g\inv)\inv, (h\inv|_{g\inv})\inv|_{(h\inv\cdot g\inv)\inv} ).
\end{align}
By Lemma \ref{lem:actionrestrictioninverses} \eqref{inv_act} the first term in \eqref{four_prod} becomes
\[
(h\inv|_{g\inv})\inv\cdot (h\inv\cdot g\inv)\inv = (h\inv|_{g\inv})\inv\cdot ( h\inv|_{g\inv} \cdot g ) = g
\]
and by Lemma \ref{lem:actionrestrictioninverses} \eqref{inv_rest} the second term in \eqref{four_prod} becomes
\[
 (h\inv|_{g\inv})\inv|_{(h\inv\cdot g\inv)\inv} = (h|_{(h\inv\cdot 
 g\inv)})|_{(h\inv\cdot g\inv)\inv} = h.
\]
So $((g,h)\inv)\inv = (g,h)$ as required. From (ZS5) we have 
$((g,h),(h\inv\cdot g\inv,h\inv|_{g\inv})) \in \Gg\Join\Hh^{(2)}$. Using 
(ZS1), (ZS3), (ZS8) and Lemma \ref{lem:actionrestrictioninverses} 
\eqref{old_ZS10} we have
\begin{align*}
 (g,h)(h\inv\cdot g\inv, h\inv|_{g\inv}) &= (g(h\cdot(h\inv\cdot g\inv)),h|_{h\inv\cdot g\inv}h\inv|_{g\inv}) \\
 &= (g(l(h)\cdot g\inv),l(h)|_{g\inv}) \\
 &= (g(b(g\inv)\cdot g\inv),b(g\inv)|_{g\inv}) \\
 &= (b(g),b(g)),
\end{align*}
so if $((g_1,h_1),(g_2,h_2))\in (\Gg\Join\Hh)^{(2)}$, then Lemma 
\ref{lem:actionrestrictioninverses} \eqref{old_ZS9} and (ZS9) imply
\[
 (g_1,h_1)(g_2,h_2)(g_2,h_2)\inv = (g_1,h_1)(b(g_2),b(g_2))=(g_1(h_1 \cdot r(h_1)),(h_1|_{r(h_1)}) r(h_1))= (g_1,h_1).
\]
A similar argument shows
\[
 (g,h)\inv (g,h) = (r(h),r(h))\quad\text{and}\quad 
 (g_1,h_1)\inv(g_1,h_1)(g_2,h_2) = (g_2,h_2),
\]
as required.
\end{proof}

\begin{remark}
In the proof of Proposition \ref{prop:ZS groupoid}, note that we used all of the 
axioms (ZS1--9). This shows the 
necessity of the rather large number of axioms.
\end{remark}

From now on we will freely identify $(\Gg\Join\Hh)^{(0)}$ with $\Uu$. We can 
consider $\Gg$ and $\Hh$ as subgroupids of $\Gg\Join\Hh$ via
\[
\Gg \cong \{ (g,t(g)) : g\in \Gg \} \quad \text{ and } \quad \Hh \cong \{ (l(h),h) : h\in \Hh \}.
\]

\noindent
Since $\Gg\Join \Hh = \Gg \restimes{t}{l} \Hh$ as a set, we may represent elements $(g',h')\in\Gg \Join \Hh$ and $(h,g) \in \Hh \restimes{r}{b} \Gg$ as pairs of arrows between elements of $\Uu$ as follows:
\[
\begin{tikzpicture}
\node (b) at (0,0){$b(g')$};
\node (t) at (0,2){$t(g') = l(h')$};
\draw [->] (t) -- node[auto] {$g'$}(b);
\node (r) at (3,2){$r(h')$};
\draw [->] (r) -- node[auto] {$h'$} (t);
\node at (5,1.0) {and};

\node (b) at (7,0){$l(h)$};
\node (t) at (10,0){$r(h) = b(g),$};
\draw [->] (t) -- node[auto] {$h$}(b);
\node (r) at (10,2){$t(g)$};
\draw [->] (r) -- node[auto] {$g$} (t);
\end{tikzpicture}
\]

\noindent
respectively.
In the Zappa-Sz\'{e}p product $\Gg\Join \Hh$, the element $(h,g) \in \Hh \restimes{r}{b} \Gg$ is identified with $(h\cdot g,h|_g) \in \Gg\Join \Hh$. Visually, this amounts to the following two diagrams being identified:
\[
\begin{tikzpicture}
\node (b) at (0,0){$l(h)$};
\node (t) at (3,0){$r(h) = b(g)$};
\draw [->] (t) -- node[auto] {$h$}(b);
\node (r) at (3,2){$t(g)$};
\draw [->] (r) -- node[auto] {$g$} (t);

\node (e) at (5.0,1.0){$\longleftrightarrow$};

\node (b) at (7,0){$b(h\cdot g)$};
\node (t) at (7,2){$t(h\cdot g) = l(h|_g)$};
\draw [->] (t) -- node[auto] {$h\cdot g$}(b);
\node (r) at (10,2){$r(h|_g)$};
\draw [->] (r) -- node[auto] {$h|_g$} (t);
\end{tikzpicture}
\]
This geometric identification corresponds to the algebraic axioms (ZS5--7). 
Now suppose 
$((g_1,h_1),(g_2,h_2)) \in (\Gg\Join\Hh)^{(2)}$, then the 
product $(g_1,h_1)(g_2,h_2) = (g_1(h_1\cdot g_2),h_1|_{g_2}h_2)$ is represented by
\[
\begin{tikzpicture}
\node (a) at (0,0){};
\node (b) at (0,1.5){};
\node (c) at (2,1.5){};
\node (d) at (2,3){};
\node (e) at (4,3){};
\draw [<-] (a) -- node[auto] {$g_1$} (b);
\draw [<-] (b) -- node[auto] {$h_1$} (c);
\draw [<-] (c) -- node[auto] {$g_2$} (d);
\draw [<-] (d) -- node[auto] {$h_2$} (e);

\node (eq) at (5,2) {$\longleftrightarrow$};

\node (a') at (7,0){};
\node (b') at (7,1.5){};
\node (c') at (7,3){};
\node (d') at (9,3){};
\node (e') at (11,3){};
\draw [<-] (a') -- node[auto] {$g_1$} (b');
\draw [<-] (b') -- node[auto] {$h_1\cdot g_2$} (c');
\draw [<-] (c') -- node[auto] {$h_1|_{g_2}$} (d');
\draw [<-] (d') -- node[auto] {$h_2$} (e');

\end{tikzpicture}
\]
Finally, if $(g,h)\in\Gg\Join\Hh$, then the inverse $(g,h)\inv = (h\inv\cdot g\inv, h\inv|_{g\inv})$ is represented by
\[
\begin{tikzpicture}

\node (a) at (0,0){};
\node (b) at (0,1.5){};
\node (c) at (2,1.5){};
\draw [<-] (a) -- node[auto] {$g$} (b);
\draw [<-] (b) -- node[auto] {$h$} (c);

\node (m) at (2.5,1){$\longmapsto$};
%\node (inv) at (2.5,1.5){$\cdot\inv$};

\node (d) at (3.5,0){};
\node (e) at (5.5,0){};
\node (f) at (5.5,1.5){};
\draw [<-] (d) -- node[below] {$h\inv$} (e);
\draw [<-] (e) -- node[auto] {$g\inv$} (f);

\node (eq) at (6.5,1) {$\longleftrightarrow$};

\node (g) at (9,0){};
\node (h) at (9,1.5){};
\node (i) at (11,1.5){};
\draw [<-] (g) -- node[auto] {$h\inv\cdot g\inv$} (h);
\draw [<-] (h) -- node[auto] {$h\inv|_{g\inv}$} (i);

\end{tikzpicture}
\]

\noindent
The following proposition determines when a given groupoid decomposes as a Zappa-Sz\'ep product.

\begin{proposition}[Internal Zappa-Sz\'{e}p products] 
\label{prop:internalproduct}
Let $\Kk$ be a groupoid and let $\Gg$ and $\Hh$ be subgroupoids. Suppose that for any $k\in \Kk$ there is a unique pair $(g,h) \in (\Gg \times \Hh) \cap \Kk^{(2)}$ such that $k = gh$. Then $\Kk \cong \Gg \Join \Hh$.
\end{proposition}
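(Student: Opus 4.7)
The plan is to build the action and restriction maps for $\Gg$ and $\Hh$ out of the unique factorisation in $\Kk$, verify axioms (ZS1)--(ZS9), and then check that $(g,h)\mapsto gh$ is a groupoid isomorphism.

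First I would pin down the unit spaces. Every $k\in\Kk$ factors as $gh$, so its range lies in $\Gg^{(0)}$ and its source lies in $\Hh^{(0)}$; applied to a unit $u\in\Kk^{(0)}$ this forces $\Kk^{(0)}\subseteq\Gg^{(0)}\cap\Hh^{(0)}$, and the reverse inclusion is automatic. Hence $\Gg^{(0)}=\Hh^{(0)}=\Kk^{(0)}=:\Uu$, giving the identification we need to even write down $\Gg\Join\Hh$. Next, given $(h,g)\in\Hh\restimes{r}{b}\Gg$, the pair $(h,g)$ is composable in $\Kk$, so $hg\in\Kk$ and by hypothesis has a unique decomposition $hg=g'h'$ with $g'\in\Gg$, $h'\in\Hh$. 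Define $h\cdot g:=g'$ and $h|_g:=h'$; these are the only candidates for the action and restriction.

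The axioms (ZS1)--(ZS9) then fall out of uniqueness applied to two different ways of factoring the same element of $\Kk$. For (ZS1) and (ZS4) I would compute $(h_1h_2)g$ in two ways: directly via the definition, and by factoring $h_2g$ first and then $h_1(h_2\cdot g)$; matching the $\Gg$- and $\Hh$-parts by uniqueness gives both axioms at once. An analogous double computation with $h(g_1g_2)$ yields (ZS2) and (ZS3). Axioms (ZS5), (ZS6), (ZS7) are just the statements that the composable product $(h\cdot g)(h|_g)$ has range $r_\Kk(h)=l(h)$, source $s_\Kk(g)=t(g)$, and is composable. For (ZS8), observe that $g=b(g)g=(b(g)\cdot g)(b(g)|_g)$ while also $g=g\cdot t(g)$ with $g\in\Gg$ and $t(g)\in\Hh\cap\Kk^{(0)}$, and uniqueness gives both $b(g)\cdot g=g$ and the bonus identity $b(g)|_g=t(g)$ (which is part of Lemma~3.2). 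Axiom (ZS9) comes from the symmetric calculation $h=hr(h)=(h\cdot r(h))(h|_{r(h)})=l(h)\cdot h$.

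Once the axioms are established, Proposition~3.1 produces $\Gg\Join\Hh$, and the map $\varphi:\Gg\Join\Hh\to\Kk$, $\varphi(g,h)=gh$, is a bijection precisely by the existence-and-uniqueness hypothesis. To see $\varphi$ is a groupoid homomorphism, note that $((g_1,h_1),(g_2,h_2))\in(\Gg\Join\Hh)^{(2)}$ exactly when $r(h_1)=b(g_2)$, which is exactly when $(g_1h_1,g_2h_2)\in\Kk^{(2)}$; and then
\[
\varphi\bigl((g_1,h_1)(g_2,h_2)\bigr)=g_1(h_1\cdot g_2)(h_1|_{g_2})h_2=g_1(h_1g_2)h_2=(g_1h_1)(g_2h_2),
\]
using the defining relation $h_1g_2=(h_1\cdot g_2)(h_1|_{g_2})$. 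A homomorphism of groupoids that is bijective on arrows is an isomorphism, so $\Kk\cong\Gg\Join\Hh$.

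I do not anticipate a genuine obstacle: the argument is essentially a bookkeeping exercise in which uniqueness of the $\Gg\Hh$-factorisation does all the work. The one spot that rewards care is the bookkeeping around units: one has to remember that units of $\Kk$ sit simultaneously in $\Gg$ and $\Hh$, which is what legitimises expressions like $b(g)\cdot g$ and $h|_{r(h)}$ and lets (ZS8) and (ZS9) be deduced from trivial factorisations.
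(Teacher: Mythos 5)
Your proposal is correct and follows essentially the same route as the paper: identify $\Kk^{(0)}=\Gg^{(0)}=\Hh^{(0)}$, define the action and restriction maps by the unique factorisation of $hg$, verify (ZS1)--(ZS9) by comparing two factorisations of the same element, and check that $(g,h)\mapsto gh$ is a bijective homomorphism. The paper dismisses the axiom checks as routine, whereas you sketch how each one follows from uniqueness (and you correctly flag the point that units of $\Kk$ lie in both subgroupoids, which is what makes (ZS8) and (ZS9) meaningful); no gaps.
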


\begin{proof}
We must first show that $\Gg$ and $\Hh$ admit the structure required to take a Zappa-Sz\'ep product. First, fix $u\in \Kk^{(0)}$. Then there is a unique $g\in \Gg$ and $h \in \Hh$ with $u = gh$. But then $u = gg\inv \in \Gg$ and $u = h\inv h \in \Hh$. Since $u = uu$, uniqueness forces $h = u = g$. Hence $\Kk^{(0)} = \Gg^{(0)} = \Hh^{(0)}$. We define action and restriction maps using the unique decomposition; given a pair $(g,h) \in (\Gg\times\Hh)\cap\Kk^{(2)}$ let $(g\cdot h, g|_h) \in (\Gg\times\Hh)\cap\Kk^{(2)}$ be the unique pair such that
\[
 gh = (g\cdot h)(g|_h).
\]
Routine calculations show these maps satisfy (ZS1--9), and $k \mapsto (g,h)$, 
where $k = gh$, is an isomorphism $\Kk \cong \Gg \Join \Hh$.
\end{proof}

Using Proposition \ref{prop:internalproduct} and Lemma \ref{lem:actionrestrictioninverses} we can show that taking groupoid Zappa-Sz\'ep products is symmetric.

\begin{corollary}
Any groupoid Zappa-Sz\'ep product $\Gg\Join\Hh$ is isomorphic to a Zappa-Sz\'ep product $\Hh\Join\Gg$.
\end{corollary}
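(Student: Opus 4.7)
The plan is to apply Proposition~\ref{prop:internalproduct} to $\Kk := \Gg \Join \Hh$ but with the roles of the two subgroupoids interchanged. Recall from the discussion following Proposition~\ref{prop:ZS groupoid} that $\Gg$ and $\Hh$ embed as subgroupoids of $\Kk$ via $g \mapsto (g,t(g))$ and $h \mapsto (l(h),h)$. A short calculation using (ZS8), (ZS9), and parts \eqref{old_ZS9}, \eqref{old_ZS10} of Lemma~\ref{lem:actionrestrictioninverses} yields the identity
\[
(g,t(g))(l(h),h) = (g,h) \quad \text{whenever } (g,h) \in \Kk,
\]
so every element of $\Kk$ admits the obvious unique decomposition as a product of an element of $\Gg$ followed by an element of $\Hh$, with uniqueness being immediate from reading off coordinates.

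To invoke Proposition~\ref{prop:internalproduct} with the factors in the opposite order, one must show that every $k \in \Kk$ decomposes uniquely as a product $hg$ with $h \in \Hh$ and $g \in \Gg$. The key observation is that inversion in $\Kk$ exchanges the two kinds of decomposition. For existence, given $k \in \Kk$, apply the preceding paragraph to write $k\inv = g'h'$ uniquely with $g' \in \Gg$ and $h' \in \Hh$; then $k = (h')\inv(g')\inv$ is a decomposition of the desired form. For uniqueness, if $h_1 g_1 = h_2 g_2$ with $h_i \in \Hh$ and $g_i \in \Gg$, then $g_1\inv h_1\inv = g_2\inv h_2\inv$, and uniqueness of the $\Gg$-then-$\Hh$ decomposition forces $g_1\inv = g_2\inv$ and $h_1\inv = h_2\inv$, hence $g_1 = g_2$ and $h_1 = h_2$.

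Applying Proposition~\ref{prop:internalproduct} to $\Kk$ with the subgroupoids taken in the order $(\Hh, \Gg)$ then produces an isomorphism $\Kk \cong \Hh \Join \Gg$, which combined with the definition $\Kk = \Gg \Join \Hh$ gives the claimed isomorphism. The only conceptual point in the argument is recognising that groupoid inversion interchanges the two factor orders; this is what makes the symmetry between $\Gg$ and $\Hh$ transparent, and is the reason the statement is essentially immediate from the internal characterisation rather than requiring a direct construction of explicit action and restriction maps for the reversed product.
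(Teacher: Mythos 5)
Your argument is correct, and it reaches the conclusion by a genuinely different mechanism than the paper's proof, even though both ultimately rest on Proposition~\ref{prop:internalproduct}. The paper writes down the reversed factorisation explicitly, namely
\[
(g,h) = (b(g),h|_{h\inv\cdot g\inv})(h\inv|_{g\inv}\cdot g,r(h)),
\]
verifies it by several applications of Lemma~\ref{lem:actionrestrictioninverses}~\eqref{inv_act} and \eqref{inv_rest}, and then proves uniqueness by solving the resulting equations. You instead observe that the $\Gg$-then-$\Hh$ decomposition $(g,h)=(g,t(g))(l(h),h)$ is trivially unique, and that the anti-automorphism $k\mapsto k\inv$ carries $\Gg$-then-$\Hh$ decompositions to $\Hh$-then-$\Gg$ decompositions and back, so existence and uniqueness of the reversed factorisation follow formally with no computation. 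It is worth noting that your abstract decomposition is in fact the same one as the paper's: applying Lemma~\ref{lem:actionrestrictioninverses}~\eqref{inv_act} and \eqref{inv_rest} to $(h')\inv$ and $(g')\inv$, where $k\inv=(h\inv\cdot g\inv,\,h\inv|_{g\inv})$, recovers exactly the displayed formula. What your route buys is conceptual economy --- the symmetry of the corollary is seen to come from groupoid inversion rather than from an ad hoc identity --- at the cost of not exhibiting the explicit action and restriction maps for $\Hh\Join\Gg$, which the paper's formula makes available for later use. Since the corollary only asserts the existence of an isomorphism with \emph{some} Zappa-Sz\'ep product $\Hh\Join\Gg$, this loss is immaterial here.
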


\begin{proof}
In light of Proposition \ref{prop:internalproduct} it suffices to notice that any $(g,h)\in\Gg\Join\Hh$ can be rewritten uniquely as
\begin{equation}\label{prod_reverse_ZS}
 (g,h) = (b(g),h|_{h\inv\cdot g\inv})(h\inv|_{g\inv}\cdot g,r(h)).
\end{equation}
That \eqref{prod_reverse_ZS} holds is a straightforward computation involving 
several applications of Lemma \ref{lem:actionrestrictioninverses} 
\eqref{inv_act} and \eqref{inv_rest}.
For the uniqueness, suppose $(g,h)=(l(h'),h')(g',t(g'))$ for some $g' \in \Gg$ and $h' \in \Hh$. Then $g=h'\cdot g'$ and $h=h'|_{g'}$. Substituting these equations into \eqref{prod_reverse_ZS} and applying Lemma \ref{lem:actionrestrictioninverses} shows that $h'=h|_{h\inv\cdot g\inv}$ and $g'=h\inv|_{g\inv}\cdot g$. Thus the decomposition in \eqref{prod_reverse_ZS} is unique.
\end{proof}

If $\Gg$ and $\Hh$ are topological groupoids with homeomorphic unit spaces, then after endowing $\Gg\restimes{t}{l}\Hh$ with the relative product topology of $\Gg \times \Hh$ it is natural to ask whether $\Gg\Join \Hh$ is a topological groupoid. It is easy to check that this is true if and only if the action and restriction maps are continuous, where $\Hh \restimes{r}{b}\Gg$ has the relative product topology.

A similar question may be asked when $\Gg$ and $\Hh$ are \'etale.

\begin{proposition}\label{etaletoetaleZS}
A Zappa-Sz\'ep product groupoid $\Gg \Join \Hh$ endowed with the relative product topology of $\Gg \times \Hh$ is \'etale if and only if both $\Gg$ and $\Hh$ are \'etale and the action and restriction maps are continuous.
\end{proposition}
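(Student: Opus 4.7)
My plan is to prove the two implications separately, using the characterisation that an \'etale groupoid is one whose range map is a local homeomorphism, equivalently one in which each element lies in an open bisection.

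For the \emph{backward} direction, I assume $\Gg,\Hh$ are \'etale and the action and restriction are continuous. The remark preceding the proposition already guarantees that $\Gg\Join\Hh$ is then a topological groupoid. To produce open bisections I fix $(g,h)\in\Gg\Join\Hh$, choose open bisections $V_0\subseteq\Gg$ through $g$ and $W\subseteq\Hh$ through $h$, and set $V=V_0\cap t\inv(l(W))$; this is open in $\Gg$ and still contains $g$ because $t(g)=l(h)\in l(W)$. Then $U=(V\times W)\cap(\Gg\restimes{t}{l}\Hh)$ is my candidate open bisection: injectivity of $l|_W$ yields, for each $g'\in V$, a unique $h'\in W$ with $l(h')=t(g')$, so $\pi_1|_U$ is a homeomorphism $U\to V$ with continuous inverse $g'\mapsto (g',(l|_W)\inv(t(g')))$; post-composing with $b|_V$ shows that the range map of $\Gg\Join\Hh$ is a local homeomorphism at $(g,h)$.

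For the \emph{forward} direction I assume $\Gg\Join\Hh$ is \'etale. Observing that $(h\cdot g,h|_g)=(l(h),h)(g,t(g))$ realises the joint map $(h,g)\mapsto(h\cdot g,h|_g)$ as the composition of the continuous embeddings $\psi\colon h\mapsto(l(h),h)$ and $\phi\colon g\mapsto(g,t(g))$ with the continuous multiplication of $\Gg\Join\Hh$; projecting onto each coordinate gives continuity of the action and of the restriction. Since $\Uu\subseteq\Gg\Join\Hh$ is open (from \'etaleness) and $\phi\inv(\Uu)=\Gu$, $\psi\inv(\Uu)=\Hu$, continuity of $\phi,\psi$ forces $\Gu$ and $\Hu$ to be open in $\Gg$ and $\Hh$ respectively; this in turn makes $\phi(\Gg)=\pi_2\inv(\Hu)$ and $\psi(\Hh)=\pi_1\inv(\Gu)$ open in $\Gg\Join\Hh$. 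Given these facts, for $g\in\Gg$ I pick an open bisection $\tilde U$ of $\Gg\Join\Hh$ through $\phi(g)$ and set $V=\phi\inv(\tilde U)$. Then $\phi(V)=\phi(\Gg)\cap\tilde U$ is open in $\tilde U$, so the range homeomorphism on $\tilde U$ restricts to a homeomorphism $\phi(V)\to b(V)$ onto an open subset of $\Uu$; pre-composing with the homeomorphism $\phi|_V\colon V\to\phi(V)$ shows $b|_V$ is a homeomorphism onto an open set, so $b$ is a local homeomorphism at $g$. The argument for $\Hh$ is symmetric via $\psi$.

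The main obstacle I anticipate is obtaining openness of $\phi(\Gg)$ inside $\Gg\Join\Hh$: without it, open bisections of the product need not intersect $\phi(\Gg)$ in open pieces, and the descent to $\Gg$ collapses. The resolution is the symmetric bootstrap above: first use the embedding $\psi$ to pull $\Uu$ back to $\Hu$, obtaining that $\Hu$ is open in $\Hh$, and then identify $\phi(\Gg)$ with the preimage $\pi_2\inv(\Hu)$, which is then automatically open in $\Gg\Join\Hh$.
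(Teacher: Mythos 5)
Your proof is correct. The backward direction is essentially the paper's argument: both proofs take a neighbourhood of $(g,h)$ of the form $(V\times W)\cap(\Gg\restimes{t}{l}\Hh)$ with $V$ an open set on which $b$ and $t$ are injective and $W$ one on which $l$ is injective, and check that the range map is a homeomorphism there; your extra step of shrinking $V$ to $V_0\cap t\inv(l(W))$ and writing down the continuous inverse $g'\mapsto (g',(l|_W)\inv(t(g')))$ merely makes explicit the openness of the image that the paper leaves implicit. The genuine difference is in the forward direction. The paper disposes of it in one sentence --- $\Gg$ and $\Hh$ are isomorphic to subgroupoids of $\Gg\Join\Hh$, hence \'etale --- and leaves the continuity of the action and restriction to the remark preceding the proposition. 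Taken literally, that sentence appeals to a principle which is false in general: a subgroupoid of an \'etale groupoid, in the subspace topology, need not be \'etale unless it is \emph{open} (the union of the unit space with the isotropy over a non-isolated point of a trivial action is a standard counterexample). Your bootstrap supplies exactly the missing openness: $\Uu$ is open in the \'etale groupoid $\Gg\Join\Hh$, pulling back along $\psi$ shows $\Hu$ is open in $\Hh$, and then $\phi(\Gg)=\pi_2\inv(\Hu)\cap(\Gg\restimes{t}{l}\Hh)$ is an open subgroupoid, from which \'etaleness descends to $\Gg$; likewise for $\Hh$. Your factorisation of $(h,g)\mapsto(h\cdot g,h|_g)$ through the continuous multiplication of $\Gg\Join\Hh$ also gives a clean direct proof of continuity of the action and restriction, which the paper's proof of this proposition never addresses. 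So your forward direction is more complete than the paper's; the one thing worth flagging on both sides is the tacit standing assumption that the identification $\Gu=\Uu=\Hu$ is a homeomorphism rather than a mere bijection, which you use to make $\phi$ and $\psi$ continuous and to know that $l(W)$ is open in $\Gu$.
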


\begin{proof}
Since $\Gg$ and $\Hh$ are both isomorphic to subgroupoids of $\Gg\Join \Hh$, assuming $\Gg\Join \Hh$ is \'etale immediately implies $\Gg$ and $\Hh$ are \'etale.

For the reverse implication, suppose $\Gg$ and $\Hh$ are \'etale. We must show that $(g,h) \mapsto b(g)$ is a local homeomorphism. To this end, fix $(g,h) \in \Gg\Join\Hh$. Using that $\Gg$ and $\Hh$ are \'etale we can find open subsets
 \[
  U,V \subset \Gg, \ W \subset \Hh
 \]
with $g\in U\cap V$ and $h\in W$ such that $b|_U, t|_V$ and $l|_W$ are all homeomorphisms. Define
\[
 X := ((U \cap V) \times W) \cap (\Gg \restimes{t}{l} \Hh) \subset \Gg \Join \Hh.
\]
Then $X$ is open, $(g,h) \in X$ and for any $(g',h'), (g'',h'') \in X$ we have
\begin{align*}
 b(g') = b(g'') &\Longleftrightarrow g' = g'' \mbox{ since } g', g'' \in U \\
 &\Longleftrightarrow t(g') = t(g'') \mbox{ since } g', g'' \in V \\
 &\Longleftrightarrow l(h') = l(h'') \mbox{ since } (g',h'), (g'',h'') \in \Gg \restimes{t}{l} \Hh \\
 &\Longleftrightarrow h' = h'' \mbox{ since } h', h'' \in W.
\end{align*}
We see that the range map $(g,h) \mapsto b(g)$ is a homeomorphism on $X$ as 
required.
\end{proof}

\begin{corollary}
Suppose $\Gg, \Hh$ and $\Kk$ are \'etale groupoids such that $\Gg$ and $\Hh$ 
are subgroupoids of $\Kk$ and $\Kk \cong \Gg \Join \Hh$, as in Proposition 
\ref{prop:internalproduct}. Then $\Kk \cong\Gg\Join\Hh$ is an isomorphism of 
topological groupoids and $\Kk$ is \'{e}tale.
\end{corollary}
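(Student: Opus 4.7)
The plan is to show that the algebraic isomorphism $\phi: \Gg\Join\Hh \to \Kk$, $(g,h) \mapsto gh$, supplied by Proposition \ref{prop:internalproduct}, is a homeomorphism. Since $\Kk$ is already \'etale by hypothesis, promoting $\phi$ to a topological isomorphism is the substantive content of the corollary. Note that $\phi$ is continuous without further effort, as it is obtained by restricting the continuous multiplication on $\Kk$ to pairs $(g,h) \in (\Gg \times \Hh) \cap \Kk^{(2)}$.

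The key step is therefore continuity of $\phi^{-1}$, i.e., of the decomposition map $\psi: \Kk \to \Gg \restimes{t}{l} \Hh$ sending $k = gh$ to $(g,h)$. Fix $k_0 \in \Kk$ with its unique factorisation $k_0 = g_0 h_0$. Because $\Gg$ and $\Hh$ are \'etale, there are open bisections $U \subset \Gg$ with $g_0 \in U$ and $V \subset \Hh$ with $h_0 \in V$. Using \'etaleness of $\Kk$ together with the standard fact that in an \'etale groupoid the product of two open bisections is an open bisection, one can shrink $U$ and $V$ so that the multiplication map $m\colon (U \times V) \cap \Kk^{(2)} \to \Kk$ is a homeomorphism onto an open neighbourhood $W$ of $k_0$ in $\Kk$. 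The uniqueness clause of Proposition \ref{prop:internalproduct} forces $\psi|_W = m^{-1}|_W$, which is continuous; since $k_0$ was arbitrary, $\psi$ is continuous everywhere, and $\phi$ is a homeomorphism.

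An alternative route, which I would mention for context, is to first verify continuity of the action and restriction maps (both of which factor through $\psi$, as $h \cdot g$ and $h|_g$ are the components of the decomposition of $hg$) and then invoke Proposition \ref{etaletoetaleZS} to obtain directly that $\Gg\Join\Hh$ is \'etale. Either way, the main obstacle is a compatibility-of-topologies issue: the bisections $U \subset \Gg$ and $V \subset \Hh$ are a priori only open in the intrinsic \'etale topologies of the subgroupoids, and one must check that they behave as open bisections inside $\Kk$ so that the local-homeomorphism argument for $m$ applies. Once the inclusions $\Gg, \Hh \hookrightarrow \Kk$ are observed to carry open bisections to open bisections---which holds because the unit spaces agree with $\Kk^{(0)}$ and a bisection is determined by its range---the rest of the argument is routine.
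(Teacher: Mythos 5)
Your strategy is essentially the paper's own, unwound: the paper also observes that $(g,h)\mapsto gh$ is continuous and then reduces everything to showing this bijection is open (equivalently, that the factorisation map $\psi$ is continuous), which it attributes in one line to \'etaleness. Your local-inverse argument on products of open bisections is exactly what that one line is hiding, and you have correctly isolated the genuine crux that the paper passes over in silence: one must know that an open bisection of the subgroupoid $\Gg$ (open in its subspace topology) is in fact open in $\Kk$, since otherwise neither the openness of the product $UV$ nor the claim that multiplication is a homeomorphism from $(U\times V)\cap\Kk^{(2)}$ onto a neighbourhood of $k_0$ is available.

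The one step that needs repair is your justification of that crux. The clause ``a bisection is determined by its range'' is false as a general statement --- distinct bisections of $\Kk$ can have identical range, and a bisection of $\Kk$ whose range is open need not itself be open (take a group bundle $\mathbb{Z}_2\times X$ and select one group element over each point of $X$ in a non-open way). The correct argument must use \'etaleness of $\Gg$ itself, not only of $\Kk$: given $g_0\in\Gg$, choose a relatively open $U\ni g_0$ in $\Gg$ on which the range map of $\Gg$ is a homeomorphism onto an \emph{open} subset of $\Uu$ (possible because $\Gg$ is \'etale and $\Gg^{(0)}=\Kk^{(0)}$ carries the same subspace topology), and shrink $U$ so that $U\subset W$ for some open $W\subseteq\Kk$ on which the range map of $\Kk$ is a homeomorphism onto an open set. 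Then the preimage of $r(U)$ under the range map restricted to $W$ is open in $\Kk$ and, by injectivity of the range map on $W$, is equal to $U$. This shows that $\Gg$ (and likewise $\Hh$) is open in $\Kk$, after which your argument goes through verbatim. Your closing remark about the alternative route is also apt, and in fact exposes a point the paper elides: the action and restriction maps are the components of $\psi$ composed with multiplication, so their continuity is a \emph{consequence} of the continuity of $\psi$ rather than an input, and establishing $\psi$ continuous first is what legitimises invoking Proposition \ref{etaletoetaleZS} to conclude that $\Gg\Join\Hh$, and hence $\Kk$, is \'etale.
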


\begin{proof}
We know from Proposition \ref{prop:internalproduct} that there is a bijective homomorphism $\Gg \Join \Hh \to \Kk$ satisfying $(g,h) \mapsto gh$. Since multiplication is continuous, to see that this map is a homeomorphism it suffices to show it is open. This follows immediately from the fact that $\Gg\Join\Hh$ is \'etale.
\end{proof}

\begin{remark}
A subset $U$ of an \'etale groupoid $\Gg$ is called a \emph{bisection} if both the range and source maps restricted to $U$ are injective. The collection $\Bis(\Gg )$ of all open bisections in $\Gg$ is an inverse semigroup under composition $UV=\{gh:(g,h)\in U\times V\cap \Gg^{(2)}\}$. If we identify a bisection $U$ with the homeomorphism $r(U) \to s(U)$ on $\Gg^{(0)}$ satisfying $r(g) \mapsto s(g)$, then it is easily checked that $\Bis(\Gg)$ is a pseudogroup of homeomorphisms of the topological space $\Gg^{(0)}$, in the sense of \cite[Section~3]{Ren2}.

The Zappa-Sz\'ep product of inverse semigroups (and semigroups more broadly) are studied in \cite{Waz}. It is natural to examine the existence of a Zappa-Sz\'ep product of $B(\Gg)$ and $B(\Hh)$, and whether there is a connection to the collection of bisections of $\Gg\Join\Hh$. We considered these problems, and there seems to be no obvious definitions for the action and restriction maps. In particular, there is no reason that the sets
\[
V\cdot U := \{ h\cdot g : h\in V, g\in U, r(h) = b(g) \}\quad\text{and}\quad V|_U := \{ h|_g : h\in V, g\in U, r(h) = b(g) \},
\]
for $U\in B(\Gg), V\in B(\Hh)$, are open bisections in $B(G)$ and $B(\Hh)$, respectively.   
\end{remark}

\section{The $C^*$-algebra of a Zappa-Sz\'ep groupoid}\label{sec: C* section}

In this section we prove the main result of this paper, which says that the 
groupoid $C^*$-algebra of the Zappa-Sz\'ep product of two 
groupoids $\Gg$ and $\Hh$ is a $C^*$-blend of the two groupoid $C^*$-algebras 
$C^*(\Gg)$ and $C^*(\Hh)$. Before we state 
this result, we briefly recall the construction of groupoid $C^*$-algebras, 
and the 
formal definition of a $C^*$-blend from \cite{Exel2014}.

For $\Gg$ a locally compact 
Hausdorff \'etale groupoid with range and source maps $r$ and $s$, define a multiplication and involution on 
$C_c(\Gg)$ by
\[
\xi * \eta (g) = \sum_{g_1g_2 = g} \xi(g_1)\eta(g_2) \quad\text{ and 
}\quad\xi^*(g) = \overline{\xi(g\inv)}.
\]
With these operations, pointwise scalar multiplication and addition, and 
$*$-algebra norm given by
\[
 \|\xi\|_I = \sup_{u\in\Gg^{(0)}} \max \left\{ \sum_{r(g) = u} |\xi(g)|, 
 \sum_{s(g) = u} |\xi(g)| \right\},
\]
$C_c(\Gg)$ becomes a normed $*$-algebra. This norm, called the $I$-norm, is 
typically not a $C^*$-norm. However, there is a $C^*$-norm on $C^*(\Gg)$ given 
by 
\[
 \|\xi\| = \sup\{ \|\pi(\xi)\| : \pi \textrm{ is a $\|\cdot\|_I$-bounded 
 $*$-representation of } C_c(\Gg) \},
\]
and the completion of $C_c(\Gg)$ under $\|\cdot\|$ is called the \emph{full 
groupoid $C^*$-algebra of }$\Gg$. 

There is also a reduced groupoid $C^*$-algebra. For each $u\in\Gg^{(0)}$ there 
is an $I$-norm-bounded representation $\Ind_u$ of $C_c(\Gg)$ on 
$\ell^2(s^{-1}(u)$) given by 
$\Ind_u(f)\delta_g=\sum_{r(h)=r(g)}f(h^{-1}g)\delta_h$. The reduced norm is 
given by $\|f\|_r=\sup_{u\in \Gg^{(0)}}\|\Ind_u(f)\|$. The completion of 
$C_c(\Gg)$ under $\|\cdot\|_r$ is called the {\em reduced groupoid 
$C^*$-algebra of $\Gg$}. For more details of these constructions we 
refer the reader to Renault's original treatment \cite{Ren}. 

We now recall Exel's notion of a $C^*$-blend from \cite{Exel2014}. 

\begin{definition}\label{def: C* blend}
For 
$C^*$-algebras $A$ and $B$ we denote by $A\otimes_\C B$ the algebraic tensor 
product. Given a $C^*$-algebra $X$ and $*$-homomorphisms 
\[
 i : A \to M(X)\quad\mbox{ and }\quad j: B \to M(X),
\]
the bilinear maps $(a,b) \mapsto i(a)j(b)$ and $(b,a) \mapsto j(b)i(a)$ 
induce linear maps 
\[
 i \circledast j : A \otimes_\C B \to M(X)\quad\text{ satisfying }\quad a\otimes b \mapsto i(a) j(b)
\]
and
\[
 j \circledast i : B\otimes_\C A \to M(X)\quad\text{ satisfying }\quad b\otimes a \mapsto j(b)i(a).
\]
A {\em $C^*$-blend} is a quintuple $(A,B,i,j,X)$, consisting of: $C^*$-algebras 
$A$, $B$, and $X$; and $*$-homomorphisms $i$ and $j$ as above, with the property that the 
range of $i\circledast j$ is contained and dense in $X$ (or, equivalently, $\range(j\circledast i)=(\range(i\circledast j))^*$ is contained and dense in $X$). 
\end{definition}

We can now state our main theorem.

\begin{theorem} \label{conj:pleasebetrue}
Suppose $\Gg\Join\Hh$ is a locally compact Hausdorff \'etale Zappa-Sz\'ep product groupoid. The maps $i : C_c(\Gg) \to C^*(\Gg\Join \Hh)$ and $j : C_c(\Hh) \to C^*(\Gg\Join \Hh)$, given by 
\[
i(\xi)(g,h) = \delta_{h,t(g)}\xi(g)\quad\text{and}\quad j(\eta)(g,h) = \delta_{g,l(h)}\eta(h),
\]
extend to $*$-homomorphisms $i:C^*(\Gg)\to C^*(\Gg\Join \Hh)$ and 
$j:C^*(\Hh)\to C^*(\Gg\Join \Hh)$, and
the quintuple $(C^*(\Gg),C^*(\Hh), i,j, C^*(\Gg\Join \Hh))$ is a $C^*$-blend.
\end{theorem}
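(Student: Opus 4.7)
The plan is to proceed in three main steps: check that the maps $i$ and $j$ are $I$-norm bounded $*$-homomorphisms at the $C_c$-level and extend by the universal property of the full groupoid $C^*$-algebras; compute the elementary product $i(\xi)j(\eta)$; and then conclude density of $\range(i\circledast j)$ via a partition of unity on compatible open bisections.

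First I would observe that the copy $\{(g,t(g)):g\in\Gg\}$ of $\Gg$ inside $\Gg\Join\Hh$ equals $(\Gg\times\Hu)\cap(\Gg\restimes{t}{l}\Hh)$, which is open in $\Gg\Join\Hh$ because $\Hu$ is open in the \'etale groupoid $\Hh$, and the map $g\mapsto(g,t(g))$ is a topological isomorphism of $\Gg$ onto this open subgroupoid. It follows that $i(\xi)$ is continuous and compactly supported on $\Gg\Join\Hh$ with $\|i(\xi)\|_I=\|\xi\|_I$. The multiplicative property of $i$ reduces to the identity
\[
(g_1,t(g_1))(g_2,t(g_2)) = (g_1g_2,t(g_2)),
\]
which follows from (ZS8) and Lemma~\ref{lem:actionrestrictioninverses}(ii), while $*$-compatibility uses $(g,t(g))\inv=(g\inv,t(g\inv))$. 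The parallel statements for $j$ rest on the dual identities $(l(h_1),h_1)(l(h_2),h_2)=(l(h_1),h_1h_2)$ and $(l(h),h)\inv=(l(h\inv),h\inv)$, which come from (ZS9) and Lemma~\ref{lem:actionrestrictioninverses}(i). Because $i$ and $j$ are $I$-bounded $*$-homomorphisms from $C_c(\Gg)$ and $C_c(\Hh)$ into the $C^*$-algebra $C^*(\Gg\Join\Hh)$, the universal property of the full groupoid $C^*$-algebra extends them to the required $*$-homomorphisms on $C^*(\Gg)$ and $C^*(\Hh)$.

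Next I would compute the elementary product
\[
i(\xi)j(\eta)(g,h) = \xi(g)\,\eta(h)\qquad\text{for all }(g,h)\in\Gg\Join\Hh,
\]
by observing that the only composable pair of the form $(g',t(g'))(l(h'),h')$ that equals $(g,h)$ is $(g,t(g))(l(h),h)$, with the simplifications $t(g)\cdot l(h)=l(h)$ and $t(g)|_{l(h)}=t(g)$ coming from Lemma~\ref{lem:actionrestrictioninverses}(i) and (ZS9). Since $i(\xi),j(\eta)\in C^*(\Gg\Join\Hh)$, their product lies in $C^*(\Gg\Join\Hh)$, giving $\range(i\circledast j)\subseteq C^*(\Gg\Join\Hh)$. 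For density it then suffices to show that every $f\in C_c(\Gg\Join\Hh)$ lies in $\range(i\circledast j)$, since $C_c(\Gg\Join\Hh)$ is dense in $C^*(\Gg\Join\Hh)$. The proof of Proposition~\ref{etaletoetaleZS} furnishes every point of $\Gg\Join\Hh$ with an open bisection neighbourhood of the product form $X=(U\times W)\cap(\Gg\restimes{t}{l}\Hh)$, where $U$ and $W$ are open bisections of $\Gg$ and $\Hh$ respectively. A partition of unity subordinate to finitely many such $X_i$ covering $\supp(f)$ reduces matters to the case $\supp(f)\subseteq X$. Since $U$ and $W$ are bisections, each $g$ in the $\Gg$-projection of $X$ determines a unique $h=(l|_W)\inv(t(g))\in W$ with $(g,h)\in X$, so one defines $\xi(g):=f(g,(l|_W)\inv(t(g)))$ (extended by zero outside $U$) and chooses $\eta\in C_c(W)$ equal to $1$ on a neighbourhood of the compact set $\{(l|_W)\inv(t(g)):g\in\supp(\xi)\}$. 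A brief check using the vanishing of $\xi$ outside $U$ and $\eta$ outside $W$ then yields $i(\xi)j(\eta)=f$ on all of $\Gg\Join\Hh$.

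The main obstacle is this last density step: one has to arrange $\xi$ and $\eta$ so that the pointwise product $\xi(g)\eta(h)$ vanishes for every $(g,h)\in\Gg\Join\Hh\setminus X$ while reproducing $f$ on $X$, which relies on the compact support of $f$, the bisection property of $U$ and $W$, and the product-type neighbourhood basis provided by Proposition~\ref{etaletoetaleZS}. The other verifications are largely bookkeeping once the algebraic identities from Lemma~\ref{lem:actionrestrictioninverses} are in place.
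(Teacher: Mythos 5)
Your proof is correct, and its first two thirds (continuity and compact support of $i(\xi)$ and $j(\eta)$, the $*$-homomorphism identities at the $C_c$-level, extension by universality, and the formula $i(\xi)j(\eta)(g,h)=\xi(g)\eta(h)$) follow the paper's argument; your density step, however, takes a genuinely different route. The paper covers $\supp(\theta)$ by precompact open bisections, uses a partition of unity, invokes Lemma~\ref{lem:groupoidalgebranorms} to convert sup-norm estimates on slices into $I$-norm (hence $C^*$-norm) estimates, and then applies Stone--Weierstrass to conclude that $\range(i\circledast j)$ is \emph{uniformly} dense in $C_c(\Gg\Join\Hh)$. You instead prove the stronger statement that $C_c(\Gg\Join\Hh)$ is literally contained in $\range(i\circledast j)$: after the partition of unity reduces to $f$ supported in a product-type set $X=(U\times W)\cap(\Gg\restimes{t}{l}\Hh)$ with $l|_W$ a homeomorphism onto its open image, you exhibit explicit $\xi$ and $\eta$ with $i(\xi)j(\eta)=f$, the key point being that if $g\in U$ with $t(g)\in l(W)$ and $(g,h)\in\Gg\restimes{t}{l}\Hh$ with $h\in W$, then $h$ is forced to be $(l|_W)^{-1}(t(g))$, while otherwise one of $\xi(g)$, $\eta(h)$ vanishes. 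This buys you two things: Lemma~\ref{lem:groupoidalgebranorms} and Stone--Weierstrass become unnecessary for the theorem, and you sidestep the mild delicacy of passing from uniform approximation to $C^*$-norm approximation (which requires keeping the approximants supported near the slice so the $I$-norm comparison applies to the difference), since there is no approximation at all. The only points worth spelling out are that the cover can be taken by product-type sets (the relative product topology has such a basis, and the sets built in the proof of Proposition~\ref{etaletoetaleZS} have the required injectivity of $l|_W$), and that $\supp(\xi)$ is a compact subset of the open set $\{g\in U: t(g)\in l(W)\}$, so the zero-extension defining $\xi$ is continuous and the set on which you require $\eta=1$ is a compact subset of $W$; both are routine.
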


\begin{remark}
Notice that the range of $i$ and $j$ are in $C^*(\Gg \Join \Hh)$, rather than the multiplier algebra, since $\Gg \Join \Hh$ is \'{e}tale by Proposition \ref{etaletoetaleZS}.
\end{remark}

To prove this result we need a lemma about {\em slices}, which are precompact 
open subsets of a groupoid on which the range and source maps are 
bijective. We think this lemma is well known, but we could not find a proof, 
so we include one here. Note that $\|\cdot\|_\infty$ denotes the usual supremum norm on functions.

\begin{lemma} \label{lem:groupoidalgebranorms}
Let $\Gg$ be a locally compact Hausdorff \'etale groupoid. Let $\xi\in 
C_c(\Gg) \subset C^*(\Gg)$ such that $\supp(\xi)$ is a slice. Then 
\[
\|\xi\| = \|\xi\|_r = \|\xi\|_I = \|\xi\|_\infty.
\]
\end{lemma}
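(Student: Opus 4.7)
The plan is to establish the chain of inequalities
\[
\|\xi\|_\infty \;\le\; \|\xi\|_r \;\le\; \|\xi\| \;\le\; \|\xi\|_I \;\le\; \|\xi\|_\infty,
\]
from which equality of all four norms is immediate. Two of the links are standard and hold for any $\xi\in C_c(\Gg)$: the middle inequality $\|\xi\|_r\le\|\xi\|$ holds because the full norm dominates every $\|\cdot\|_I$-bounded representation, and $\|\xi\|\le\|\xi\|_I$ is the defining property of the full norm. So the two genuine steps are (i) $\|\xi\|_I\le\|\xi\|_\infty$, and (ii) $\|\xi\|_\infty\le\|\xi\|_r$; the slice hypothesis is what lets us close the loop.

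For (i), I would unpack the definition of $\|\xi\|_I$ and use that $\supp(\xi)$ being a slice means $r$ and $s$ are injective on it. Consequently, for every unit $u$ the sum $\sum_{r(g)=u}|\xi(g)|$ contains at most one nonzero summand (and similarly for the source-sum), so each such sum is bounded by $\|\xi\|_\infty$, giving $\|\xi\|_I\le\|\xi\|_\infty$. The reverse inequality $\|\xi\|_\infty\le\|\xi\|_I$ is automatic by picking $u=r(g_0)$ for a $g_0$ nearly attaining the sup norm, so in fact $\|\xi\|_I=\|\xi\|_\infty$.

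For (ii), which I expect to be the main point, I would test the regular representation on a single basis vector $\delta_u$ at a judiciously chosen unit. Given $g_0\in\supp(\xi)$, set $u=s(g_0)$ and compute
\[
\Ind_u(\xi)\delta_u \;=\; \sum_{r(h)=u}\xi(h\inv)\,\delta_h.
\]
The condition $\xi(h\inv)\ne 0$ forces $h\inv\in\supp(\xi)$, hence $s(h\inv)=u$; but $\supp(\xi)$ is a slice and $g_0$ is already an element of $\supp(\xi)$ with source $u$, so injectivity of $s$ on $\supp(\xi)$ forces $h\inv=g_0$. Thus the sum collapses to the single term $\xi(g_0)\delta_{g_0\inv}$, which has norm $|\xi(g_0)|$ in $\ell^2(s\inv(u))$. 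Since $\|\delta_u\|=1$, we obtain $\|\Ind_u(\xi)\|\ge|\xi(g_0)|$, and taking the supremum over $g_0\in\supp(\xi)$ yields $\|\xi\|_r\ge\|\xi\|_\infty$.

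The only subtle step is (ii), but the slice hypothesis is exactly what makes the relevant sum collapse to one term; without it, cancellation in the regular representation could prevent the sup norm from being recovered. Once these two estimates are in place, the full chain of inequalities is forced to be a chain of equalities, completing the proof.
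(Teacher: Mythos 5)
Your proof is correct, and the first half (showing $\|\xi\|_I=\|\xi\|_\infty$ via injectivity of $r$ and $s$ on the slice, then sandwiching $\|\xi\|$ between $\|\xi\|_r$ and $\|\xi\|_I$) is exactly what the paper does. Where you genuinely diverge is in the reduced-norm estimate. The paper computes $\|\xi\|_r$ \emph{exactly} via the $C^*$-identity: it shows that the slice hypothesis forces $\xi^*\xi$ to be supported on $\Gg^{(0)}$, that $\Ind_u(\xi^*\xi)$ then acts diagonally as the scalar $\xi^*\xi(u)$, and hence that $\|\xi\|_r^2=\|\xi^*\xi\|_\infty=\|\xi\|_\infty^2$. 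You instead prove only the one-sided bound $\|\xi\|_r\geq\|\xi\|_\infty$ by testing $\Ind_{s(g_0)}(\xi)$ on the basis vector $\delta_{s(g_0)}$ and using injectivity of $s$ on $\supp(\xi)$ to collapse the sum to a single term of modulus $|\xi(g_0)|$; this is more elementary (no convolution $\xi^*\xi$ needed) and suffices because your chain of inequalities closes up. The trade-off is that the paper's route yields the identity $\|\Ind_u(\xi)\|^2=|\xi^*\xi(u)|$ for each $u$, which is a slightly stronger local statement, whereas yours is leaner. One small caveat, inherited from the paper's stated formula for $\Ind_u$ rather than from your reasoning: taken literally, that formula gives you the vector $\xi(g_0)\delta_{g_0\inv}$, whose label has source $r(g_0)$ rather than $u$, so it does not actually lie in $\ell^2(s\inv(u))$ unless $g_0$ is in the isotropy; with the standard convention $\Ind_u(f)\delta_g=\sum_{h\in s\inv(u)}f(hg\inv)\delta_h$ the collapsed term is $\xi(g_0)\delta_{g_0}$ and your norm computation goes through verbatim. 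This is a notational wrinkle to fix, not a gap in the argument.
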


\begin{proof}
We first show that $\|\xi\|_I = \|\xi\|_\infty$.  For any unit $u\in\Gu$ we 
have
\[
 |r\inv(u) \cap \supp(\xi)|, |s\inv(u) \cap \supp(\xi)| \leq 1
\]
since $r,s$ are local homeomorphisms on $\supp(\xi)$. Therefore
\[
 \|\xi\|_I = \sup_{u\in\Gg^{(0)}} \max \left\{ \sum_{r(g) = u} |\xi(g)|, 
  \sum_{s(g) = u} |\xi(g)| \right\}= \sup_{g\in\Gg} |\xi(g)|= \|\xi\|_\infty.
\]
To see that $\|\xi\|_{r} = \|\xi\|_\infty$, first fix $g\in\Gg$. We have
\[
 \xi^* \xi (g) = \sum_{hk=g} \xi^*(h)\xi(k)= \sum_{hk=g} 
 \overline{\xi(h\inv)}\xi(k).
\]
If $\xi^* \xi (g) \neq 0$, there exists $h,k\in\Gg$ with $hk=g$ and 
$\overline{\xi(h\inv)}\xi(k) \neq 0$. Then $h\inv, k \in \supp(\xi)$, and 
since $r|_{\supp(\xi)}$ is injective, we have 
\[
g=hk \Longrightarrow  r(h\inv) = r(k) \Longrightarrow h\inv = k 
\Longrightarrow g\in \Gg^{(0)}.
\]
Hence $\xi^*\xi \in C_0(\Gg^{(0)})$. Now, fix $u\in\Gu$ and 
$\delta_g\in\ell^2(\Gg_u)$, where $\Gg_u:=\{g \in \Gg : g^{-1}g=u\}$. Since $\supp(\xi^*\xi) \subset 
 \Gg^{(0)}$ we have
\[
 \Ind_u(\xi^*\xi)\delta_g =\sum_{r(h)=r(g)}\xi^*\xi(h^{-1}g)\delta_h
 = \xi^*\xi(g\inv g)\delta_g= \xi^*\xi(u)\delta_g,
\]
and so $\|\Ind_u(\xi^*\xi)\| = |\xi^*\xi(u)|$. Hence
\[
 \|\xi\|_r^2 = \|\xi^*\xi\|_r= \sup_{u\in\Gu} \|\Ind_u(\xi^*\xi)\|= 
 \sup_{u\in\Gu} |\xi^*\xi(u)|= \|\xi^*\xi\|_\infty = \|\xi\|_\infty^2.
\]
Finally, $\|\xi\|_r \leq \|\xi\| \leq \|\xi\|_I = \|\xi\|_r$, so we have shown 
all the required equalities.
\end{proof}

\begin{proof}[Proof of Theorem \ref{conj:pleasebetrue}] 
Fix $\xi \in C_c(\Gg)$. We claim that $i(\xi) \in C_c(\Gg\Join\Hh)$. To
see that $i(\xi)$ is continuous, fix an open subset $V\subseteq \C$. If 
$0\not\in V$, then
\[
i(\xi)\inv(V) = \{(g,t(g)) : \xi(g) \in V \} = (\xi\inv (V) \times \Hu )\cap 
(\Gg\restimes{t}{l}\Hh).
\] 
If $0 \in V$, then
\begin{align*}
 i(\xi)\inv(V) &= \{(g,t(g)) : \xi(g) \in V \} \cup \{(g,h) \in \Gg\restimes{t}{l} \Hh : h \in \Hh \setminus \Hu \} \\
 & = ((\xi\inv (V) \times \Hu) \cup (\Gg \times \Hh\setminus\Hu)) \cap 
 (\Gg\restimes{t}{l} \Hh).
\end{align*}
Since $\xi$ is continuous, we have $\xi\inv(V)$ open, and since $\Hh$ is 
Hausdorff and \'etale, both $\Hu$ and $\Hh\setminus\Hu$ are open. So in either 
case, $i(\xi)\inv(V)$ is open in the relative product topology, and 
$i(\xi)$ is continuous. The support of $i(\xi)$ is the set $\supp (\xi) 
\restimes{t}{l} 
\Hu$, which is homeomorphic to $\supp(\xi)$ via $(g,t(g)) \mapsto g$. Since  
$\supp(\xi)$ is compact, we have $i(\xi) \in C_c(\Gg\Join\Hh)$, as claimed. A 
symmetric argument 
using that $\Gg$ is Hausdorff and \'etale shows that $j(\eta) \in 
C_c(\Gg\Join\Hh)$ for any $\eta\in C_c(\Hh)$.

We extend $i$ and $j$ to $*$-homomorphisms $C^*(\Gg)\to C^*(\Gg\Join \Hh)$ and 
$C^*(\Hh)\to C^*(\Gg\Join \Hh)$, respectively, and
we now claim that $(C^*(\Gg),C^*(\Hh), i,j, C^*(\Gg\Join \Hh))$ is a 
$C^*$-blend. Firstly, for each $\xi\in C_c(\Gg)$ and $\eta\in C_c(\Hh)$ we have
\[
 i\circledast j (\xi\otimes\eta)(g,h) = \xi(g)\eta(h),
\]
from which we see that $ i\circledast j (\xi\otimes\eta)$ is continuous. 
We also have 
\[
\supp(i\circledast j(\xi\otimes\eta)) = (\supp(\xi)\times\supp(\eta)) \cap 
(\Gg\restimes{t}{l}\Hh),
\]
which is compact. So the image of $i\circledast j$ is contained in 
$C_c(\Gg\Join \Hh)$. To complete the proof we need to show that this image is 
dense in $C^*(\Gg\Join \Hh)$.

For an arbitrary function $\theta\in C_c(\Gg\Join \Hh)$ we can cover the 
support
by a finite number of precompact open bisections $\{U_k : 1\leq k \leq n \}$. 
If $\{ \pi_k : 1\leq k \leq n \}$ is a partition of unity for $\supp(\theta)$ 
with $\supp(\pi_k) \subset U_k$, then $\theta = \sum_{k=1}^n \theta \pi_k$, 
where $\supp (\theta \pi_k)$ is a precompact open bisection. Since we know 
from Lemma~\ref{lem:groupoidalgebranorms} that $\|\theta \pi_k\|_\infty = 
\|\theta \pi_k\|_I$, it suffices to show that the image of $i\circledast j$ is 
\emph{uniformly} dense in $C_c(\Gg\Join \Hh)$. To this end, fix $(g,h) \neq 
(g',h')$. By the Stone-Weierstrass theorem for locally compact spaces, it is 
enough to find $\xi\in C_c(\Gg)$ and $\eta\in C_c(\Hh)$ with $i\circledast 
j(\xi\otimes\eta)(g,h) =1$ and $i\circledast j (\xi\otimes\eta)(g',h') = 0$. 
Without loss of generality assume $g\neq g'$. Fix $\xi\in C_c(\Gg)$ with 
$\xi(g)=1, \xi(g') = 0$ and $\eta\in C_c(\Hh)$ with $\eta(h) = 1$. Then
\[
 i\circledast j(\xi\otimes\eta)(g,h) = \xi(g)\eta(h) = 1\quad\text{and}\quad 
 i\circledast j(\xi\otimes\eta)(g',h') = \xi(g')\eta(h') = 0,
\]
as required.
\end{proof}

\begin{remark}\label{ref: chars of blends}
If $G$ and $H$ are subgroups 
of a 
group $K$, then $GH=\{gh:g\in G,h\in H\}$ is a group if and only if $GH=HG$. 
Moreover, $GH$ is isomorphic to $G\Join H$ if and only if $G\cap H=\{e\}$ (see \cite[Satz~6]{Redei}). There is a 
similar characterisation for the product of subsets of a $C^*$-algebra to be a 
$C^*$-blend. Suppose $A$ and $B$ are $C^*$-subalgebras of a $C^*$-algebra $C$, and 
denote by $AB$ the 
set $\overline{\text{span}}\{ ab : a\in A, b\in B \}$. Then the following 
are equivalent:
\begin{enumerate}
\item $AB = BA$,
\item $AB$ is a $C^*$-algebra,
\item there exist $C^*$-homomorphisms $i : A \to M(AB)$ and $j: B \to M(AB)$ 
such that $(A,B,i,j,AB)$ is a $C^*$-blend.
\end{enumerate} 
For $(2)\implies (3)$ we use maps $i : A\to M(AB)$ and $j : B\to M(AB)$ given 
by $i(a)x = ax \text{ and } j(b)x = bx$. Implications $(3)\implies (2)$ 
and $(1)\Longleftrightarrow (2)$ are straightforward exercises.
\end{remark}

\section{Examples}\label{sec:examples}

In our final section we examine several examples of Zappa-Sz\'{e}p product 
groupoids and their $C^*$-algebras.

\subsection{$*$-commuting endomorphisms} \label{sec:starcommutingendos} 

In this section we show that every pair of $*$-commuting endomorphisms of a 
topological space gives rise to 
a Zappa-Sz\'ep product of Deaconu-Renault groupoids (see Example~\ref{eg: DR 
groupoid}). 

Recall from \cite{AR} that 
a pair of commuting endomorphisms $S$ and $T$ of a topological space $X$ are 
said to 
{\em $*$-commute} if, for every 
$x,y \in X$ with $Tx = Sy$, there exists a unique $z \in X$ with $Sz = x$ and 
$Tz = y$. We call such $S$ and $T$ {\em $*$-commuting endomorphisms}.

\begin{proposition} \label{prop:starcommute}
Suppose $S$ and $T$ are $*$-commuting endomorphisms of a topological space $X$. Then there is an action $\theta$ of $\mathbb{N}^2$ on $X$ given by $\theta_{(m_1,m_2)}=S^{m_1} T^{m_2}$, and the Deaconu-Renault groupoid for this action is the internal Zappa-Sz\'ep product of the individual Deaconu-Renault groupoids for the actions of $\N$ on $X$ induced by $S$ and $T$.
\end{proposition}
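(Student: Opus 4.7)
The strategy is to verify the hypotheses of Proposition~\ref{prop:internalproduct}, which will identify $\Kk := X \rtimes_\theta \N^2$ as the internal Zappa-Sz\'ep product of the subgroupoids corresponding to $S$ and $T$. The preliminary checks are routine: since $*$-commuting endomorphisms commute, $\theta_{(m_1,m_2)} = S^{m_1}T^{m_2}$ is a well-defined action of $\N^2$ by continuous endomorphisms; and the maps $(x,p,y) \mapsto (x,(p,0),y)$ and $(x,p,y) \mapsto (x,(0,p),y)$ embed $\Gg := X \rtimes_S \N$ and $\Hh := X \rtimes_T \N$ as subgroupoids of $\Kk$ sharing $X$ as unit space.

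The substance of the proof is to show that every $(x,(p_1,p_2),y) \in \Kk$ decomposes uniquely as a composable product $(x,p_1,z)(z,p_2,y)$ with $(x,p_1,z) \in \Gg$ and $(z,p_2,y) \in \Hh$. Given a witness $S^{m_1}T^{m_2}x = S^{n_1}T^{n_2}y$ for the element (with $m - n = (p_1,p_2)$), I need to produce $z \in X$ satisfying $S^{m_1}x = S^{n_1}z$ and $T^{m_2}z = T^{n_2}y$. Setting $u := S^{m_1}x$ and $v := T^{n_2}y$, the hypothesis rearranges (using commutativity of $S$ and $T$) to $T^{m_2} u = S^{n_1} v$, so what is needed is an ``iterated $*$-commuting'' lemma at the exponents $(n_1, m_2)$.

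I plan to establish, by induction on $a+b$, the following: for all $a,b \geq 1$ and $u,v \in X$ with $T^b u = S^a v$ there is a unique $z \in X$ with $S^a z = u$ and $T^b z = v$. The base case $a = b = 1$ is the defining axiom of $*$-commuting; the inductive step peels off one factor of $S$ (or $T$), applies the $*$-commuting axiom once to produce an intermediate element, and then invokes the inductive hypothesis at strictly lower total exponent. Uniqueness propagates through the induction because every application of $*$-commuting produces a uniquely determined intermediate. Applying this lemma to $(u,v) = (S^{m_1}x, T^{n_2}y)$ yields the required $z$.

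For uniqueness of the decomposition, suppose $z$ and $z'$ both yield valid decompositions, possibly with different witnessing exponents; after enlarging to a common pair $(a,b)$ we find $S^a z = S^a z'$ and $T^b z = T^b z'$, and then the common values $u := S^a z$ and $v := T^b z$ satisfy $T^b u = S^a v$, so the uniqueness half of the iterated $*$-commuting lemma forces $z = z'$. With existence and uniqueness of the decomposition in hand, Proposition~\ref{prop:internalproduct} supplies the desired isomorphism $\Kk \cong \Gg \Join \Hh$. The main obstacle is the iterated $*$-commuting lemma, as it is what translates the single-step axiom of \cite{AR} into a statement about arbitrary elements of the $\N^2$-indexed Deaconu-Renault groupoid, and it drives both the construction of $z$ and the uniqueness of the decomposition.
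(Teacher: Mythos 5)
Your proposal follows the paper's proof essentially verbatim: both reduce to Proposition~\ref{prop:internalproduct} and obtain the unique intermediate point $z$ with $S^{n_1}z=S^{m_1}x$ and $T^{m_2}z=T^{n_2}y$ from the fact that powers of $*$-commuting maps again $*$-commute. The only differences are that the paper asserts this last fact without proof while you supply the induction on $a+b$, and your uniqueness argument (reconciling possibly different witnessing exponents before invoking uniqueness of $z$) is slightly more explicit than the paper's one-line remark.
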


Since $S$ and $T$ commute, $\theta_{(m_1,m_2)}:= S^{m_1} T^{m_2}$ gives an 
action $\theta$ of $\mathbb{N}^2$ by continuous endomorphisms of $X$. Let 
$X\rtimes_\theta \N^2$ be the corresponding Deaconu-Renault groupoid, and 
$X\rtimes_S\N$ and $X\rtimes_T\N$ be the 
Deaconu-Renault groupoids for the actions of $\N$ on $X$ induced by $S$ and $T$, respectively. Notice that $X\rtimes_S\N$ and $X\rtimes_T\N$ can be viewed as subgroupoids of $X \rtimes_\theta \N^2$ via
\begin{align*}
X \rtimes_S \N &\cong \{ (x,m-n,y) : m,n\in\N\times\{0\} \},\text{ and}\\
X \rtimes_T \N &\cong \{ (x,m-n,y) : m,n\in\{0\} \times \N \}.
\end{align*}
So to prove Proposition~\ref{prop:starcommute} we need to show that
\[
 X\rtimes_\theta \N^2 \cong (X\rtimes_S\N) \Join (X\rtimes_T\N).
\]
\begin{proof}[Proof of Proposition~\ref{prop:starcommute}]
We aim to use Proposition~\ref{prop:internalproduct}. Fix $(x,m-n,y)\in 
X \rtimes_\theta \N^2$. Write $m = (m_1,m_2)$ and $n = (n_1,n_2)$. By 
definition of $X \rtimes_\theta \N^2$ we have
\[
 S^{n_1} T^{n_2} y = T^{m_2} S^{m_1} x
\]
Since $S$ and $T$ $*$-commute, the maps $S^{m_1}$ and $T^{m_2}$ also 
$*$-commute. Therefore, there is a unique $z\in X$ such that $S^{n_1}z = 
S^{m_1}x$ and $T^{m_2}z = T^{n_2}y$. This information is summarised in the 
following diagram:
\[
\begin{tikzpicture}
\node (y) at (0,4) {$y$};
\node (z) at (3,4) {$z$};
\node (x) at (6,4) {$x$};
\node (Ty) at (0,2) {$T^{n_2}y$};
\node (Sx) at (6,2) {$S^{m_1}x$};
\node (eq) at (3,0) {$S^{n_1}T^{n_2}y = T^{m_2}S^{m_1}x$};

\draw [|->] (y) -- node[left] {$T^{n_2}$} (Ty);
\draw [|->] (z) -- node[left] {$T^{m_2}$} (Ty);
\draw [|->] (x) -- node[right] {$S^{m_1}$} (Sx);
\draw [|->] (z) -- node[right] {\,\,\,$S^{n_1}$} (Sx);
\draw [|->] (Ty) -- node[right] {\,\,\,$S^{n_1}$} (eq);
\draw [|->] (Sx) -- node[above] {$T^{m_2}$} (eq);

\end{tikzpicture}
\]
Hence, we have elements
\[
 (x,(m_1,0)-(n_1,0),z) \in X \rtimes_S \N \subset X \rtimes_\theta \N^2
\]
and
\[
 (z,(0,m_2)-(0,n_2),y) \in X\rtimes_T \N \subset X \rtimes_\theta \N^2
\]
with $(x,(m_1,0)-(n_1,0),z)(z,(0,m_2)-(0,n_2),y) = (x,m-n,y)$. Since 
$z$ was unique, this decomposition is also unique and so Proposition 
\ref{prop:internalproduct} provides us with the desired isomorphism.
\end{proof}

\begin{remark}\label{rem: DR blend}
Applying Theorem~\ref{conj:pleasebetrue} in this setting gives a $C^*$-blend 
\[
(C^*(X\rtimes_S\N),C^*(X\rtimes_T\N), i,j, C^*(X\rtimes_\theta \N^2)).
\]
\end{remark}

\subsection{$1$-coaligned $2$-graphs}

We know from \cite[Defintion~2.1]{MW2012} (also see \cite{W}) that examples of $*$-commuting maps come from the shift map on certain $2$-graphs. We recall the details. 

We view the monoid $\N^2$ as a category with one object 
in the usual way. We write $e_1 = (1,0)$ and $e_2 = (0,1)$ for the canonical 
generators. Recall from \cite{KP2000} that a \emph{$2$-graph} is a small 
category 
$\Lambda$ equipped with a degree functor $d: \Lambda \to \N^2$ which satisfies 
the factorisation property, in the sense that whenever $\lambda\in\Lambda$ and 
$m,n\in\N^2$ satisfy $d(\lambda) = m+n$, 
there are unique elements $\mu,\nu\in\Lambda$ satisfying $d(\mu)=m$, 
$d(\nu)=n$ and $\lambda = \mu\nu$. The objects of $\Lambda$ can be identified 
with $\Lambda^0:=d^{-1}(0)$. The codomain and domain maps are denoted by $r$ 
and $s$, and are called the range and source maps. A $2$-graph is called {\em 
row-finite and with no sources} if for every $v\in\Lambda^0$ and $n\in\N^2$ 
the set $\{\lambda\in\Lambda:d(\lambda)=n,\, r(\lambda)=v\}$ is nonempty and 
finite. 

Let $\Omega_2$ be the category with objects $\N^2$, morphisms 
$\{(m,n) : m,n\in\N^2, m\leq n\}$ where $\N^2$ has the usual partial order, 
and range and source maps $r(m,n) = m, s(m,n) = n$. With the degree functor $d(m,n) = 
n-m$, $\Omega_2$ is a $2$-graph. If $\Lambda$ is a $2$-graph, an 
\emph{infinite path} in $\Lambda$ is a functor $x : \Omega_2 \to 
\Lambda$. We write $\Lambda^\infty$ for the space of infinite paths. If 
$\Lambda$ is row-finite and with no sources, then $\Lambda^\infty$ equipped 
with the topology generated by cylinder sets $Z(\lambda) := 
\{x\in\Lambda^\infty: x(0,d(\lambda)) = \lambda \}$ is a totally disconnected 
locally compact 
Hausdorff space. For each $p\in\N^2$ consider the shift map $\sigma^p : 
\Lambda^\infty 
\to \Lambda^\infty$ given by $\sigma^p(x)(m,n) = x(m+p,n+p)$. Each $\sigma^p$ 
is a local homeomorphism. If in addition $\Lambda$ has no 
sinks, in the sense that for the each $v\in\Lambda^0$ and $n\in\N^2$ the set 
$\{\lambda\in\Lambda:d(\lambda)=n,\, s(\lambda)=v\}$ is nonempty, then each 
$\sigma^p$ is also surjective. So for $\Lambda$ a $2$-graph which is 
row-finite and with no sinks or sources, the map $k\mapsto\sigma^k$ 
determines an action of $\N^2$ by endomorphisms 
$\Lambda^\infty$. Let $\Gg_\Lambda = \Lambda^\infty \rtimes \N^2$ be the 
associated Deaconu-Renault groupoid.

\begin{definition}{\cite[Defintion~2.1]{MW2012}}
A $2$-graph $\Lambda$ is \textit{$1$-coaligned} if for every pair $(e^1, e^2) 
\in \Lambda^{e_1}  \restimes{s}{s} \Lambda^{e_2}$ there exists a unique pair $(f^1, 
f^2) \in \Lambda^{e_1} \restimes{r}{r} \Lambda^{e_2}$ such that $f^1e^2 = f^2e^1$.
\end{definition}

\noindent
A large class of examples of $1$-coaligned $2$-graphs are provided in 
\cite[Theorem 3.1]{MW2012}. The connection between $1$-coaligned $2$-graphs 
and $*$-commuting endomorphisms comes from the following result (which applies 
to more general $k$-graphs, but we state only for $2$-graphs).

\begin{theorem}{\cite[Corollary 2.4]{MW2012}}
If $\Lambda$ is a 1-coaligned row-finite $2$-graph with no sinks or sources, 
then for each $i\not=j$, $\sigma^{e_i}$ and $\sigma^{e_j}$ are $*$-commuting 
surjective local homeomorphisms.
\end{theorem}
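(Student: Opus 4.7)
The plan is to leverage the preliminaries already in place: row-finiteness together with no sources or sinks makes each $\sigma^p$ a surjective local homeomorphism of $\Lambda^\infty$, so the novel content for the pair $(\sigma^{e_i},\sigma^{e_j})$ with $i\neq j$ is purely the $*$-commuting condition. Fix $i\neq j$ in $\{1,2\}$ and suppose $x,y\in\Lambda^\infty$ satisfy $\sigma^{e_j}(x)=\sigma^{e_i}(y)$. My task reduces to producing a unique $z\in\Lambda^\infty$ with $\sigma^{e_i}(z)=x$ and $\sigma^{e_j}(z)=y$.

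For existence, I would first isolate the initial edges $e^j:=x(0,e_j)\in\Lambda^{e_j}$ and $e^i:=y(0,e_i)\in\Lambda^{e_i}$. Evaluating $\sigma^{e_j}(x)=\sigma^{e_i}(y)$ at the object $0\in\Omega_2$ gives $s(e^j)=x(e_j)=y(e_i)=s(e^i)$, so $(e^j,e^i)\in\Lambda^{e_j}\restimes{s}{s}\Lambda^{e_i}$. The $1$-coaligned hypothesis then supplies a unique pair $(f^j,f^i)\in\Lambda^{e_j}\restimes{r}{r}\Lambda^{e_i}$ with $f^je^i=f^ie^j$, and these are earmarked as the initial $e_j$- and $e_i$-edges of the path $z$ I intend to build.

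Next I would define $z\colon\Omega_2\to\Lambda$ by setting $z(0,e_i):=f^i$, $z(0,e_j):=f^j$, $z(0,e_i+e_j):=f^je^i=f^ie^j$, and then extending via $z(m,n):=x(m-e_i,n-e_i)$ whenever $m\geq e_i$ and $z(m,n):=y(m-e_j,n-e_j)$ whenever $m\geq e_j$; every remaining value $z(0,n)$ is forced by the factorisation property of $\Lambda$. Consistency on the overlap $m\geq e_i+e_j$ is literally the hypothesis $\sigma^{e_j}(x)=\sigma^{e_i}(y)$, and the compatibility between the new initial edges and the tails inherited from $x$ and $y$ is exactly the coalignment identity $f^ie^j=f^je^i$. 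Granting $z$ is a well-defined functor, the identities $\sigma^{e_i}(z)=x$ and $\sigma^{e_j}(z)=y$ are immediate. Uniqueness is then crisp: any rival $z'$ has $z'(0,e_i)\in\Lambda^{e_i}$ and $z'(0,e_j)\in\Lambda^{e_j}$ sharing range, with $z'(0,e_i)\cdot x(0,e_j)=z'(0,e_j)\cdot y(0,e_i)$, so the uniqueness clause in the $1$-coaligned property collapses $z'(0,e_i)=f^i$ and $z'(0,e_j)=f^j$, after which $\sigma^{e_i}(z')=x$ and $\sigma^{e_j}(z')=y$ pin down every other value.

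The step I expect to cost the most care is verifying that the piecewise prescription really does assemble into a functor $\Omega_2\to\Lambda$. Each $z(0,n)$ whose degree has both components nonzero admits two natural factorisations, one through $f^i$ and one through $f^j$, and reconciling them means chasing the factorisation property through the shifts of $x$ and $y$ while repeatedly invoking $f^ie^j=f^je^i$. Nothing subtle is needed beyond these ingredients, but the notation is fiddly and is the only place where the argument is not essentially a direct translation of the $1$-coaligned axiom.
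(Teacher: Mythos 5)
This statement is not proved in the paper at all: it is imported verbatim from \cite[Corollary~2.4]{MW2012}, so there is no in-paper argument to compare yours against. Your blind proof is nonetheless correct and is essentially the expected one: surjectivity and the local-homeomorphism property are already supplied by the row-finite/no-sinks/no-sources discussion in the preliminaries, and the $*$-commuting condition reduces, exactly as you set it up, to applying $1$-coalignment to the initial edges $e^j=x(0,e_j)$, $e^i=y(0,e_i)$ and assembling the lift $z$. The step you flag as delicate does go through: for $n\geq e_i+e_j$ one writes $x(0,n-e_i)=e^j w$ and $y(0,n-e_j)=e^i w$ with $w=\sigma^{e_j}(x)(0,n-e_i-e_j)=\sigma^{e_i}(y)(0,n-e_i-e_j)$, so the two prescriptions $f^i x(0,n-e_i)$ and $f^j y(0,n-e_j)$ agree precisely because $f^ie^j=f^je^i$, and the uniqueness clause of coalignment pins down $z'(0,e_i)$ and $z'(0,e_j)$ as you describe.
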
 

Using our results we can now decompose both the graph groupoid of a 
$1$-coaligned $2$-graph, and its groupoid $C^*$-algebra, the graph algebra. 
We direct the reader to \cite{Raeburn} for an account of directed graphs, 
$k$-graphs, and their $C^*$-algebras. For a $2$-graph $\Lambda$ we define the 
{\em blue graph} $B_\Lambda$ and the {\em red graph} $R_\Lambda$ to be the 
directed graphs
\[
B_\Lambda=(B_\Lambda^0:=\Lambda^0,B_\Lambda^1:=\Lambda^{e_1}, r|_{\Lambda^{e_1}},s|_{\Lambda^{e_1}})\quad\text{and}\quad R_\Lambda=(R_\Lambda^0:=\Lambda^0,R_\Lambda^1:=\Lambda^{e_2}, r|_{\Lambda^{e_2}},s|_{\Lambda^{e_2}}).
\] 

\begin{theorem}\label{thm: 2-blue-red graph}
For every 1-coaligned row-finite $2$-graph $\Lambda$ with no sinks or sources 
we have
\[
\Gg_\Lambda \cong (\Lambda^\infty \rtimes_{\sigma^{e_1}} \N) \Join 
 (\Lambda^\infty \rtimes_{\sigma^{e_2}} \N).
\]
Moreover, there are $*$-homomorphisms $i:C^*(B_{\Lambda})\to C^*(\Lambda)$ and 
$j:C^*(R_\Lambda)\to C^*(\Lambda)$ which make 
$(C^*(B_\Lambda),C^*(R_\Lambda),i,j,C^*(\Lambda))$ a $C^*$-blend.
\end{theorem}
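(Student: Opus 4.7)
The plan is to deduce both assertions as direct consequences of Proposition \ref{prop:starcommute}, Theorem \ref{conj:pleasebetrue} and standard facts about graph $C^*$-algebras. I begin by handling the groupoid isomorphism. Applying \cite[Corollary 2.4]{MW2012} to the 1-coaligned row-finite $2$-graph $\Lambda$ with no sinks or sources gives that the shifts $\sigma^{e_1}$ and $\sigma^{e_2}$ are $*$-commuting surjective local homeomorphisms of $\Lambda^\infty$. The $\N^2$-action $\theta$ with $\theta_{(m,n)}=\sigma^{me_1+ne_2}$ is then exactly the action produced from $S=\sigma^{e_1}$ and $T=\sigma^{e_2}$ in Proposition \ref{prop:starcommute}. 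Since by definition $\Gg_\Lambda=\Lambda^\infty\rtimes_\theta\N^2$, Proposition \ref{prop:starcommute} immediately supplies the required isomorphism $\Gg_\Lambda\cong(\Lambda^\infty\rtimes_{\sigma^{e_1}}\N)\Join(\Lambda^\infty\rtimes_{\sigma^{e_2}}\N)$.

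For the $C^*$-blend statement, I would first apply Theorem \ref{conj:pleasebetrue} (equivalently, Remark \ref{rem: DR blend}) to the Zappa-Sz\'ep product just obtained. This yields $*$-homomorphisms
\[
i':C^*(\Lambda^\infty\rtimes_{\sigma^{e_1}}\N)\to C^*(\Gg_\Lambda)\quad\text{and}\quad j':C^*(\Lambda^\infty\rtimes_{\sigma^{e_2}}\N)\to C^*(\Gg_\Lambda)
\]
making the quintuple of Deaconu--Renault $C^*$-algebras together with $C^*(\Gg_\Lambda)$ into a $C^*$-blend. I then transport this blend across three $*$-isomorphisms: Kumjian--Pask \cite{KP2000} gives $C^*(\Gg_\Lambda)\cong C^*(\Lambda)$, and analogous Deaconu--Renault-to-graph-algebra identifications provide $C^*(\Lambda^\infty\rtimes_{\sigma^{e_1}}\N)\cong C^*(B_\Lambda)$ and $C^*(\Lambda^\infty\rtimes_{\sigma^{e_2}}\N)\cong C^*(R_\Lambda)$. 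Composing $i'$ and $j'$ with these identifications produces the desired $i:C^*(B_\Lambda)\to C^*(\Lambda)$ and $j:C^*(R_\Lambda)\to C^*(\Lambda)$; since isomorphisms preserve the density and ranges of the bilinear maps $i\circledast j$ and $j\circledast i$, the $C^*$-blend property is preserved by the transport.

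The main technical point will be the last two identifications. The approach is to verify that the characteristic functions of the bisections $\{(x,-e_1,\sigma^{e_1}(x)):x\in Z(\mu)\}$ indexed by $\mu\in\Lambda^{e_1}$, together with the projections associated to vertices in $\Lambda^0$, generate $C^*(\Lambda^\infty\rtimes_{\sigma^{e_1}}\N)$ and satisfy the Cuntz--Krieger relations for the directed graph $B_\Lambda$; the universal property then delivers a surjection $C^*(B_\Lambda)\twoheadrightarrow C^*(\Lambda^\infty\rtimes_{\sigma^{e_1}}\N)$, and a gauge-invariant uniqueness argument establishes injectivity. This is essentially the $1$-graph case of Kumjian--Pask's theorem applied to the blue shift on $\Lambda^\infty$ (which acts on a larger space than $B_\Lambda^\infty$, but whose Deaconu--Renault $C^*$-algebra is nevertheless canonically isomorphic to $C^*(B_\Lambda)$); the symmetric argument handles $R_\Lambda$.
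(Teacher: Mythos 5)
Your proposal is correct and follows essentially the same route as the paper: the groupoid isomorphism comes from \cite[Corollary 2.4]{MW2012} combined with Proposition \ref{prop:starcommute}, the blend comes from Theorem \ref{conj:pleasebetrue}, and the identifications $C^*(B_\Lambda)\cong C^*(\Lambda^\infty\rtimes_{\sigma^{e_1}}\N)$, $C^*(R_\Lambda)\cong C^*(\Lambda^\infty\rtimes_{\sigma^{e_2}}\N)$ are obtained by exhibiting Cuntz--Krieger families and invoking gauge-invariant uniqueness, exactly as the paper indicates (and leaves to the reader). Your additional remarks --- that the blend property transports across the three $*$-isomorphisms, and that the blue shift acts on $\Lambda^\infty$ rather than $B_\Lambda^\infty$ yet still yields $C^*(B_\Lambda)$ --- correctly flesh out details the paper omits.
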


This result follows from Theorem~\ref{conj:pleasebetrue} once the 
isomorphisms 
$C^*(B_\Lambda)\cong C^*(\Lambda^\infty \rtimes_{\sigma^{e_1}} \N)$ and 
$C^*(R_\Lambda)\cong C^*(\Lambda^\infty \rtimes_{\sigma^{e_2}} \N)$ are 
established; this is an exercise in finding appropriate Cuntz-Krieger families 
in $C^*(\Lambda^\infty \rtimes_{\sigma^{e_1}} \N)$ and $C^*(\Lambda^\infty 
\rtimes_{\sigma^{e_2}} \N)$, and applying the gauge-invariant uniqueness 
theorem. We leave the details to the reader.

\subsection{Skew product groupoids}\label{sec:skewproductgroupoid}

Fix an \'etale groupoid $\Gg$, a discrete group $A$ and a continuous 
homomorphism $c : \Gg \to A$. Recall from Example~\ref{ex:skewproductgroupoid} 
the construction of the skew product groupoid $\Gg(c)$; this groupoid is also 
\'{e}tale because $A$ is discrete. 

The formula $\beta\cdot (g,\alpha) := 
(g,\alpha\beta\inv)$ defines a left action of $A$ on the space 
$\Gg(c)$. For
a composable pair $((g,\alpha),(h,\alpha c(g)))\in\Gg(c)^{(2)}$, this 
action 
satisfies
\begin{align*}
 \beta \cdot (g,\alpha)(h,\alpha c(g)) = \beta\cdot (gh,\alpha)
 &= (gh,\alpha \beta\inv) \\
 &= (g,\alpha \beta\inv)(h,\alpha \beta\inv c(g)) \\
 &= \big(\beta\cdot (g,\alpha) \big)(h,\alpha c(g)c(g)\inv \beta\inv 
 c(g)) \\
 &= \big(\beta\cdot (g,\alpha) \big)\big( c(g)\inv \beta c(g) \cdot 
 (h,\alpha 
 c(g)) \big).
\end{align*}
This identity is suggestive of a Zappa-Sz\'ep product structure on $\Gg 
(c)\times A$ 
with restriction given by $\beta|_{(g,\alpha)} := c(g)\inv \beta c(g)$. 
The 
next result 
says that is indeed the case, although 
we have to be careful with the unit spaces, which makes the details a 
little 
more complicated.

\begin{proposition}\label{prop: skew prod ZSP}
Fix an \'etale groupoid $\Gg$, a discrete group $A$ and a continuous 
homomorphism $c : \Gg \to A$. There is a left action of $A$ on the 
space 
$\Gg^{(0)} 
\times A$ given by $\beta\cdot (u,\alpha) := (u,\alpha\beta\inv)$. If 
$\Hh:= A 
\ltimes 
(\Gg^{(0)} \times A)$ denotes the corresponding transformation 
groupoid, with 
range and source maps denoted $l$ and $r$, then the maps $\cdot:\Hh 
\restimes{r}{b} \Gg(c)\to \Gg(c)$ and $|:\Hh \restimes{r}{b} \Gg(c)\to 
\Hh$ 
given by
\[
(\beta,(gg\inv,\alpha))\cdot(g,\alpha) := (g,\alpha \beta\inv) 
\quad\text{and}\quad(\beta,(gg\inv,\alpha))|_{(g,\alpha)} := (c(g)\inv 
\beta 
c(g),(g\inv g,\alpha c(g)))
\]
satisfy (ZS1--9), and hence induce a Zappa-Sz\'ep product groupoid 
$\Gg(c)\Join\Hh$.
\end{proposition}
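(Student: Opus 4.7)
The plan is to verify axioms (ZS1)--(ZS9) by direct computation, while being careful about the unit-space bookkeeping. As a first step I would unpack the structure of $\Hh$. Because $\Hh$ is a transformation groupoid, an element $(\beta,(u,\alpha))\in\Hh$ has source $r(\beta,(u,\alpha))=(u,\alpha)$ and range $l(\beta,(u,\alpha))=\beta\cdot(u,\alpha)=(u,\alpha\beta\inv)$, while the product of two composable elements is $(\beta_1,(u,\alpha_2\beta_2\inv))(\beta_2,(u,\alpha_2))=(\beta_1\beta_2,(u,\alpha_2))$. Under the canonical identification $\Gg(c)^{(0)}=\Gg^{(0)}\times A=\Hh^{(0)}$, the formula defining the fibred product $\Hh\restimes{r}{b}\Gg(c)$ forces $(\beta,(u,\alpha))$ to be paired with $(g,\alpha)$ satisfying $u=gg\inv$, which is exactly why the hypothesis in the stated formulas has the form $(\beta,(gg\inv,\alpha))$ paired with $(g,\alpha)$.

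Next I would dispose of the range--source axioms (ZS5)--(ZS9), which are immediate. For (ZS5) one checks $b(h\cdot g)=(gg\inv,\alpha\beta\inv)=l(h)$; for (ZS6), $r(h|_g)=(g\inv g,\alpha c(g))=t(g)$; and for (ZS7) both $t(h\cdot g)$ and $l(h|_g)$ equal $(g\inv g,\alpha\beta\inv c(g))$ after cancelling $c(g)c(g)\inv$ on the left factor of the $A$-component. Axiom (ZS8) is automatic because a unit $b(g)=(gg\inv,\alpha)$ corresponds to the identity element $e\in A$ in $\Hh$, and (ZS9) follows since $r(h)$ corresponds to a unit $(u,\alpha)$ of $\Gg(c)$ with $c(u)=e$, so conjugation by $c(u)$ is trivial.

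The substantive verifications are (ZS1)--(ZS4), which follow from the homomorphism property of $c$ and the group law on $A$. For (ZS1), both $(h_1h_2)\cdot g$ and $h_1\cdot(h_2\cdot g)$ evaluate to $(g,\alpha(\beta_1\beta_2)\inv)$. For (ZS3), iterated restriction produces the conjugate $c(g_2)\inv c(g_1)\inv\beta c(g_1)c(g_2)$, which equals $c(g_1g_2)\inv\beta c(g_1g_2)$ because $c$ is a homomorphism; the $\Gg^{(0)}\times A$ coordinate matches because $(g_1g_2)\inv(g_1g_2)=g_2\inv g_2$ and $\alpha c(g_1g_2)=\alpha c(g_1)c(g_2)$. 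Axiom (ZS4) is similar: the first coordinate collapses via $c(g)\inv\beta_1c(g)\cdot c(g)\inv\beta_2c(g)=c(g)\inv\beta_1\beta_2c(g)$, and one must confirm that the intermediate factor $h_1|_{h_2\cdot g}$ is composable with $h_2|_g$ in $\Hh$, which is a short computation using (ZS5). Finally (ZS2) is dual to (ZS4) and is handled the same way.

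The main obstacle, such as it is, is not depth but bookkeeping: at every step one needs to verify that the elements being written down actually lie in the relevant fibred product (i.e.\ composability in $\Gg(c)$, in $\Hh$, and across $\Hh\restimes{r}{b}\Gg(c)$) before plugging into the defining formulas. The underlying reason everything works is transparent, namely that $\beta\mapsto c(g)\inv\beta c(g)$ is the ``twist'' correcting for the failure of the naive translation action $\beta\cdot(g,\alpha)=(g,\alpha\beta\inv)$ to preserve the composition $(g,\alpha)(h,\alpha c(g))$ in $\Gg(c)$, exactly as displayed in the motivating identity preceding the proposition.
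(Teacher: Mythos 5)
Your proposal is correct and follows exactly the route the paper takes: the paper declares the verification of the action property and of (ZS1)--(ZS9) a routine checklist and leaves the details to the reader, and your computations (the identification of $r$, $l$ and the fibred product, the cancellation $c(g)c(g)\inv$ in (ZS7), the use of $c(u)=e$ on units for (ZS9), and the conjugation/homomorphism argument for (ZS1)--(ZS4)) carry out that checklist correctly. In fact you supply more detail than the paper does, including the composability checks the paper's sketch only alludes to.
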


The proof of this result is nothing more than a checklist of what it 
takes for 
$\beta\cdot (u,\alpha) := (u,\alpha\beta\inv)$ to give an action, and 
the 
axioms (ZS1--9). As 
each calculation is routine, we leave the details to the reader. Notice 
that 
an arbitrary element of $\Gg(c)\Join\Hh$ has the form 
$((g,\alpha),(\beta,(g\inv g,\alpha c(g)\beta)))$, and is completely 
determined by the elements $(g,\alpha)\in\Gg(c)$ and $\beta\in A$. 
So as a space it is homeomorphic to $\Gg(c) \times A$ and in some sense 
should be considered as the Zappa-Sz\'ep product of the groupoid 
$\Gg(c)$ with 
the group $A$.

The $C^*$-algebras of skew product groupoids are well studied in 
\cite{Ren,KQR2001}, and we use the notation of \cite{Ren}. We know 
from  
\cite{KQR2001} that $c$ induces a coaction $\delta_c : C^*(\Gg) \to 
C^*(\Gg) \otimes C^*(A)$ satisfying $\delta_c(\xi) = \xi \otimes U_b$
whenever $\xi\in C_c(\Gg)$ satisfies $\supp\xi \subset c\inv(\{b\})$. 
In 
\cite[Theorem~4.3]{KQR2001} the authors show that $C^*(\Gg(c))$ is 
isomorphic 
to the coaction crossed product $C^*(\Gg) 
\rtimes_{\delta_c} A$. The canonical left-action $\beta \cdot 
(g,\alpha) 
\mapsto (g,\beta\alpha)$ commutes with right multiplication in $\Gg(c)$ 
and 
hence induces an action 
$\gamma:A\to\Aut C^*(\Gg(c))$ characterised by 
$\gamma_\beta(\xi)(g,\alpha) = 
\xi(\beta\inv\cdot(g,\alpha))$, for $\xi\in C_c(\Gg(c))$. In 
\cite[Corollary~4.5]{KQR2001} it is shown that $\gamma$ is dual to the 
coaction 
$\delta_c$, so that $C^*(\Gg(c)) \rtimes_\gamma A \cong C^*(\Gg) 
\otimes 
\Kk(\ell^2(A))$.

The alternative left action $\beta\cdot (g,\alpha) = 
(g,\alpha\beta\inv)$ that we used to build the Zappa-Sz\'ep product 
does {\em 
not} commute with right multiplication in $\Gg(c)$, and hence does not
induce an action of $A$ on $C^*(\Gg(c))$. We do not therefore expect a 
crossed-product description of $C^*(\Gg(c)\Join\Hh)$. 
Theorem~\ref{conj:pleasebetrue} does apply and says that 
$C^*(\Gg(c)\Join\Hh)$ 
is the blend of $C^*(\Gg(c))$ and $C^*(\Hh)$.   

We can also say a little more about the Zappa-Sz\'ep product groupoid 
$\Gg(c)\Join\Hh$. There is a right action of $\Gg$ by automorphisms 
of $A$ given by $\alpha\cdot g = c(g)\inv \alpha c(g)$, from which we can form 
the semidirect product groupoid $\Gg\ltimes A$. Pairs $((g,\alpha),(h,\beta))$ 
are composable in $\Gg\ltimes A$ if $(g,h)\in \Gg^{(2)}$, and composition and 
inversion are given by $(g,\alpha)(h,\beta) = 
(gh,c(h)\inv \alpha c(h) \beta)$ and $(g,\alpha)\inv = 
(c(g)\alpha\inv c(g)\inv,\alpha\inv)$. One can check that the map 
$\widetilde{c} : \Gg\ltimes A \to A$ satisfying $\widetilde{c}(g,\alpha) = 
c(g)\alpha$ is a continuous groupoid homomorphism, and that the map 
$((g,\alpha),\beta) \mapsto 
((g,\alpha),(\beta,(g\inv g,\alpha c(g)\beta)))$ is a 
groupoid isomorphism of $(\Gg\ltimes A)(\widetilde{c})$ onto $\Gg(c)\Join\Hh$. 
So despite $C^*(\Gg\Join\Hh)$ not admitting a natural crossed-product 
description, we can use the results of \cite{KQR2001} to describe it as the 
coaction crossed product $C^*(\Gg\ltimes A) \rtimes_{\delta_{\widetilde{c}}} 
A$.

\end{document}